	\documentclass[12pt,a4paper]{amsart}
	\usepackage{graphicx, latexsym, amsmath, amsthm, amssymb}

	\addtolength{\textwidth}{2.5cm}
	\addtolength{\oddsidemargin}{-1cm}
	\addtolength{\evensidemargin}{-1cm}
	\addtolength{\topmargin}{-.7cm}
	\addtolength{\textheight}{1.4cm}
	%\addtolength{\textheight}{1.2cm}

	%\usepackage{graphicx,latexsym,amssymb,color}
	%\usepackage{showkeys}
%
% Definition of theorem environments
%
       \def\pvs{\par\vspace*}
   \def\nid{\noindent}
\def\pvc{\pvs{ 3mm}} \def\pvcn{\pvc \nid} 

\theoremstyle{plain}
\newtheorem{thm}{Theorem}[section]
\newtheorem{lem}[thm]{Lemma}
\newtheorem{prop}[thm]{Proposition}
\newtheorem{cor}[thm]{Corollary}
\theoremstyle{definition}

\theoremstyle{remark}
\newtheorem{rem}[thm]{Remark}

\newtheorem{example}[thm]{Example}
\newtheorem*{acknowledgments}{Acknowledgments}
\numberwithin{equation}{section}
\numberwithin{figure}{section}
%
% Definition of new commands
%
\newcommand{\ov}{\overline}

	  \title[Homotopy, $\Delta$-equivalence and concordance for knots]
		{Homotopy, $\Delta$-equivalence and concordance for knots 
		in the complement of a trivial link}
	 \author[Thomas FLEMING {et al.}]
		{Thomas FLEMING, Tetsuo SHIBUYA, Tatsuya Tsukamoto \\
		 and Akira YASUHARA}
	   \date{}
	 \thanks{The last author is partially supported by a Grant-in-Aid 
	 	 for Scientific Research (C) ($\#$20540065) of the Japan 
	 	 Society for the Promotion of Science.}
	 	 
			\begin{document}
			\maketitle
			
\begin{center} {\it Dedicated to Professor Kunio Murasugi on his 80th birthday}
\end{center}

\begin{abstract}
Link-homotopy and self $\Delta$-equivalence are equivalence
relations on links. It was shown by J. Milnor (resp. the last
author) that Milnor invariants determine whether or not a link is
link-homotopic (resp. self $\Delta$-equivalent) to a trivial link.
We study link-homotopy and self $\Delta$-equivalence on a certain
component of a link with fixing the rest components, in other words,
homotopy and $\Delta$-equivalence of knots in the complement of a
certain link. We show that Milnor invariants determine whether a
knot in the complement of a trivial link is null-homotopic, and
give a sufficient condition for such a knot to be
$\Delta$-equivalent to the trivial knot. We also give a sufficient
condition for knots in the complements of the trivial knot to be
equivalent up to $\Delta$-equivalence and concordance.
\end{abstract}

\section{Introduction}

%In this paper, all links will be assumed to be ordered and oriented, and
%they will be considered up to  ambient isotopy.

For an ordered and oriented $n$-component link $L$, the {\em Milnor invariant}
$\overline{\mu}_L(I)$ is defined for each multi-index $I=i_1i_2...i_m$ with
entries from $\{1,...,n\}$ \cite{Milnor,Milnor2}. Here $m$ is called the 
{\em length} of $\overline{\mu}_L(I)$ and denoted by $|I|$. Let $r(I)$ denote 
the maximum number of times that any index appears in $I$. Hence any index 
appear in $I$ at most $r(I)$ times. It is known that if $r(I)=1$, then 
$\overline{\mu}_L(I)$ is a {\em link-homotopy} invariant \cite{Milnor}, where 
{\em link-homotopy} is an equivalence relation on links generated by self 
crossing changes.

While Milnor invariants are not strong enough to give a link-homotopy 
classification for links, they determine whether a link is link-homotopic 
to a trivial link or not. In fact, it is known that a link $L$ in $S^3$ is 
link-homotopic to a trivial link if and only if $\overline{\mu}_L(I)=0$ for 
any $I$ with $r(I)=1$ \cite{Milnor,HL}.

%\begin{thm}[{\cite{Milnor,HL}}] \label{homotopy}\label{VLH} %{VLH}
%A link $L$ in $S^3$ is link-homotopic to a trivial link if and only if
%$\overline{\mu}_L(I)=0$ for any $I$ with $r(I)=1$.  \end{thm}

Even if a link is link-homotopic to a trivial link, it is not necessarily 
true that a certain component of the link is null-homotopic in the complement 
of the other components. In this paper, we study homotopy of knots in the 
complement of a certain link.

Although Milnor invariants $\overline{\mu}(I)$ with $r(I)\geq 2$ are not 
necessarily link-homotopy invariants, we have the following. The \lq only if' 
part holds for more general setting, see Proposition~\ref{free-Ck-inv}.

\begin{thm}\label{free-homotopy}
Let $L=K_0\cup K_1\cup\cdots\cup K_n$ be an $(n+1)$-component link 
such that $L-K_0$ is a trivial link. 
Then $K_0$ is null-homotopic %(i.e., $K$ bounds a singular disk)
in $S^3\setminus(L-K_0)$ if and only if $\ov{\mu}_L(I0)=0$ 
for any multi-index $I$ with entries from $\{1,...,n\}$.
\end{thm}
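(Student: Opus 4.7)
The only if direction is covered by the more general Proposition~\ref{free-Ck-inv} referenced in the excerpt, so the real work is the if direction. The strategy is to read the free homotopy class of $K_0$ in $S^3\setminus(L-K_0)$ off the Magnus expansion of the preferred longitude $\lambda_0$, and then to invoke the residual nilpotence of free groups.

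Since $L-K_0$ is a trivial $n$-component link, I first identify $\pi_1(S^3\setminus(L-K_0))$ with the free group $F=\langle\mu_1,\dots,\mu_n\rangle$ on the meridians of $K_1,\dots,K_n$. Filling in the tubular neighborhood of $K_0$ kills its meridian and sends the longitude $\lambda_0\in\pi_1(S^3\setminus L)$ to a loop freely homotopic to $K_0$, so the free homotopy class of $K_0$ in $S^3\setminus(L-K_0)$ is carried by the image $\ell_0\in F$ of $\lambda_0$ under the inclusion-induced map. It therefore suffices to show that the hypothesis forces $\ell_0=1$ in $F$.

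Next I would invoke Milnor's identification of his invariants with the Magnus expansion: under the Magnus embedding $F\hookrightarrow\mathbb{Z}\langle\langle X_1,\dots,X_n\rangle\rangle$ sending $\mu_i\mapsto 1+X_i$, the coefficient of $X_{i_1}\cdots X_{i_{m-1}}$ in the expansion of $\ell_0$ equals, up to sign, $\overline{\mu}_L(i_1\cdots i_{m-1}0)$; because the ambient group is free, there is no residual indeterminacy. I would then prove by induction on $q\ge 2$ that the vanishing of $\overline{\mu}_L(I0)$ for every $I$ with $|I|\le q-1$ forces $\ell_0\in F^{(q)}$, the $q$th term of the lower central series of $F$. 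The base case $q=2$ is the vanishing of the linking numbers $\overline{\mu}_L(i0)=\operatorname{lk}(K_i,K_0)$, and the inductive step uses that the class of $\ell_0$ in the abelian group $F^{(q)}/F^{(q+1)}$ is detected exactly by the degree-$q$ Magnus coefficients. Under the full hypothesis, $\ell_0\in\bigcap_{q\ge 2}F^{(q)}=\{1\}$ by Magnus's residual nilpotence theorem, which is the desired conclusion.

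The main obstacle is verifying cleanly that $\overline{\mu}_L(I0)$ really equals the corresponding Magnus coefficient of $\ell_0\in F$ for every multi-index $I$ with entries in $\{1,\dots,n\}$. One must check that the Magnus expansion of $\lambda_0$ in the nilpotent quotients of $\pi_1(S^3\setminus L)$ projects, upon setting $X_0=0$, to the free-group expansion of $\ell_0$ in $F$, and that the usual Milnor indeterminacy from the other longitudes is absorbed by the inductive vanishing of lower-degree invariants; the triviality of $L-K_0$ is exactly what makes this bookkeeping succeed. Once this identification is established, the remainder of the argument is the standard residual-nilpotence closing move.
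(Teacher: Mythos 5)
Your proposal is correct and follows essentially the same route as the paper: the paper's proof of the ``if'' direction also expresses the $0$th longitude as a word in the meridians modulo each lower central series quotient, identifies its Magnus coefficients with $\mu_L(I0)$, sets $X_0=0$ to land in the free group $\pi_1(S^3\setminus(L-K_0))$, and concludes via the injectivity of the Magnus expansion for free groups (their Lemma~\ref{trivial}), which is just your residual-nilpotence closing move in different packaging; the indeterminacy bookkeeping you flag is handled exactly as you suggest, using that $L-K_0$ is trivial so all $\overline{\mu}_L(J)$ with $J$ avoiding $0$ vanish. For the ``only if'' direction the paper gives a short direct argument with zero-framed parallels but explicitly notes it is subsumed by Proposition~\ref{free-Ck-inv}, so your deferral is fine.
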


\begin{rem}\label{remark1}
(1)~In the theorem above the condition that $L-K_0$ is a trivial link is
essential.
Let $K$ be a non-trivial knot and $K'$ be the longitude of
a tubular neighbourhood of $K$.
Then the link $L=K\cup K'$ is a {\em boundary link}, i.e.,
its components bound disjoint orientable surfaces.
Hence the all Milnor invariants of $L$ vanish.
(Note that $L$ is link-homotopic to a trivial link.)
On the other hand, since $K$ is a non-trivial knot,
it follows from Dehn's lemma that
$K'$ is not null-homotopic in $S^3\setminus K$
\cite[Chapter~4, B.2]{R}.\\
(2)~In \cite[Example~6.4]{Yasu2}, the last author gave a 3-component link
$L=K_1\cup K_2\cup K_3$ such that $K_i$ is null-homotopic in
$S^3\setminus(L-K_i)~(i=2,3)$ and $K_1$ is not null-homotopic in
$S^3\setminus(L-K_1)$.
\end{rem}

A link is  {\em Brunnian} if every proper sublink of it is trivial.
In particular, trivial links are Brunnian. By
Theorem~\ref{free-homotopy}, we have the following corollary. This
gives a characterization of Brunnian links, where each component is
null-homotopic in the complement of the rest of the components.

\begin{cor}\label{brunnian-free-homotopy}
For an  $n$-component Brunnian link $L$,
the $i$th component $K$ is null-homotopic in $S^3\setminus(L-K)$
if and only if $\overline{\mu}_L(Ii)=0$ for any multi-index $I$
with entries from $\{1,...,n\}\setminus\{i\}$.
\end{cor}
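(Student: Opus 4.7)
The plan is to reduce Corollary \ref{brunnian-free-homotopy} directly to Theorem \ref{free-homotopy} via a relabeling argument. The key observation is that the Brunnian hypothesis provides exactly the triviality condition needed to invoke the theorem: since every proper sublink of $L$ is trivial, in particular the sublink $L-K$ of the remaining $n-1$ components is a trivial link.

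First I would set up the relabeling. Writing $K = K_i$, rename the distinguished component as $K'_0 := K_i$ and relabel the components of $L-K$ as $K'_1,\ldots,K'_{n-1}$ in any fixed order; call the resulting ordered link $L'$. Since $L'-K'_0 = L-K$ is a trivial $(n-1)$-component link, the hypothesis of Theorem \ref{free-homotopy} is satisfied for $L'$.

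Applying Theorem \ref{free-homotopy} to $L'$, we obtain that $K'_0 = K$ is null-homotopic in $S^3\setminus(L'-K'_0) = S^3\setminus(L-K)$ if and only if $\overline{\mu}_{L'}(J0)=0$ for every multi-index $J$ with entries from $\{1,\ldots,n-1\}$. Translating back through the relabeling, the multi-indices $J0$ with entries in $\{0,1,\ldots,n-1\}$ and terminal entry $0$ correspond bijectively to multi-indices $Ii$ with entries in $\{1,\ldots,n\}$ and terminal entry $i$, where $I$ has entries from $\{1,\ldots,n\}\setminus\{i\}$; and the Milnor invariants are preserved under this relabeling of components. This yields exactly the stated equivalence.

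There is no serious obstacle here; the only thing to check carefully is that the relabeling of components preserves the Milnor invariants, but this is a standard naturality property of $\overline{\mu}$ with respect to permutations of the indexing set, so the corollary is immediate from the theorem.
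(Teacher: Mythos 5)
Your proposal is correct and is exactly the argument the paper intends: the corollary is stated as an immediate consequence of Theorem~\ref{free-homotopy}, the only substantive point being that the Brunnian hypothesis guarantees the proper sublink $L-K$ is trivial, after which the relabeling and the naturality of $\overline{\mu}$ under permutation of components give the stated equivalence. No further comment is needed.
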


\begin{rem}\label{remark2}
In the last section, we give a 3-component Brunnian link $L$ such
that $L$ is link-homotopic to a trivial link, and each component $K$
of $L$ is not null-homotopic in $S^3\setminus(L-K)$
(Example~\ref{example1}). There are no such examples for 2-component
links, since a knot in the complement of the trivial knot is
null-homotopic if and only if it is null-homologous. Hence, for a
2-component Brunnian link, the following three conditions are
mutually equivalent: (i)~It is link-homotopic to a trivial link.~
(ii)~The linking number vanishes. (iii)~each component is
null-homotopic in the complement of the other component.
\end{rem}

Let $L=K_0\cup K_1\cup\cdots\cup K_n$ be an $(n+1)$-component link.
%$L-K_0$ is a trivial link.
If $L-K_0$ bounds a disjoint union $F$ of orientable surfaces 
$F_1,..., F_n$ with $\partial F_i=K_i~(i=1,...,n)$ and 
$F\cap K_0=\emptyset$, then by \cite[Section 6]{C}, $\overline{\mu}_{L}(I0)=0$
for any multi-index $I$ with entries from $\{1,...,n\}$.
By combining this and Theorem~\ref{free-homotopy},
we have the following corollary.

\begin{cor}\label{free-homotopy2}
Let $L=K_0\cup K_1\cup\cdots\cup K_n$ be an $(n+1)$-component link 
such that $L-K_0$ is a trivial link. If $L-K_0$ bounds a disjoint union 
$F$ of orientable surfaces $F_1,..., F_n$ with $\partial F_i=K_i~(i=1,...,n)$
and $F\cap K_0=\emptyset$, then $K_0$ is null-homotopic in $S^3\setminus(L-K_0)$.
\end{cor}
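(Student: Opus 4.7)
The plan is to obtain this corollary as a direct combination of two inputs that have already been set up: the classical vanishing of Milnor invariants for links bounding disjoint Seifert surfaces, and Theorem~\ref{free-homotopy}. No new geometric construction is needed; the work is entirely in assembling the two.

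First I would invoke \cite[Section~6]{C}, as the paragraph preceding the statement already notes: since $K_1,\dots,K_n$ bound pairwise disjoint orientable surfaces $F_1,\dots,F_n$ and each $F_i$ is disjoint from $K_0$, every Milnor invariant $\overline{\mu}_L(I0)$ with $I$ a multi-index in $\{1,\dots,n\}$ vanishes. The intuition is that the surfaces $F_i$ give preferred longitudes for the $K_i$ that lie deep in the derived series of $\pi_1(S^3\setminus L)$ and never interact with $K_0$, so that the Magnus expansion of any word ending in $0$ with other entries in $\{1,\dots,n\}$ has trivial coefficient.

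Second I would apply Theorem~\ref{free-homotopy}. The hypothesis that $L-K_0$ is a trivial link is part of the corollary's assumption, and the preceding step supplies the Milnor-invariant condition $\overline{\mu}_L(I0)=0$ for every multi-index $I$ with entries in $\{1,\dots,n\}$. Theorem~\ref{free-homotopy} then immediately yields that $K_0$ is null-homotopic in $S^3\setminus(L-K_0)$. The only thing to check is that the range of multi-indices matches between the two inputs, which it does on the nose, so there is no real obstacle; the statement is essentially a formal consequence of the assembly.
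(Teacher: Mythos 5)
Your proposal matches the paper's own argument exactly: the paper likewise cites \cite[Section 6]{C} for the vanishing of $\overline{\mu}_L(I0)$ for all multi-indices $I$ with entries from $\{1,\dots,n\}$, and then applies the \lq if' direction of Theorem~\ref{free-homotopy} to conclude that $K_0$ is null-homotopic in $S^3\setminus(L-K_0)$. No gap; this is the intended two-step assembly.
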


\begin{rem}\label{remark3}
J. Hillman has pointed out that Corollary~\ref{free-homotopy2} can be shown 
by using the universal covering space of $S^3\setminus(L-K_0)$ as follows: 
We may construct the maximal free cover of $S^3\setminus(L-K_0)$ by gluing 
infinite copies of $S^3$-cut-along-$F$, for example see \cite[Section 2.2]{Hill}.
Note that the maximal free cover is the universal cover, since the link
$\partial F=L-K_0$ is trivial.
If $K_0\cap F=\emptyset$, then $K_0$ lifts to the universal cover,
and hence is null-homotopic in $S^3\setminus(L-K_0)$.
\end{rem}

Two $n$-component links $L_0$ and $L_1$ are {\em concordant} if there 
are mutually disjoint $n$ annuli $A_1,..., A_n$ in $S^3\times[0,1]$
with $(\partial (S^3\times[0,1]), \partial A_j)=
(S^3\times\{0\},K_{0j})\cup(-S^3\times\{1\},-K_{1j})$ $(j=1,...,n)$,
where $-X$ denotes $X$ with the opposite orientation.
A link is {\em slice} if it is concordant to a trivial link.
Since the Milnor invariants are concordance invariants \cite{Casson},
Theorem~\ref{free-homotopy} gives us the following corollary.

\begin{cor}\label{brunnian-slice}
For any Brunnian, slice link $L$, each component $K$ is null-homotopic
in $S^3\setminus(L-K)$.
\end{cor}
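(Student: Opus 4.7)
The plan is to derive this directly from Theorem~\ref{free-homotopy} (equivalently, Corollary~\ref{brunnian-free-homotopy}) by verifying that both hypotheses of that theorem are supplied automatically by the Brunnian and slice assumptions. Fix a component $K$ of $L$ and relabel so that $K$ plays the role of $K_0$; write $L = K\cup K_1\cup\cdots\cup K_n$.

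First I would use the Brunnian hypothesis to observe that $L - K$, being a proper sublink, is a trivial link. This is precisely the structural hypothesis needed to invoke Theorem~\ref{free-homotopy}.

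Next I would use the slice hypothesis to show $\overline{\mu}_L(IK) = 0$ for every multi-index $I$ with entries from $\{1,\ldots,n\}$. Since $L$ is slice, it is concordant to some trivial link $T$, and by Casson's theorem \cite{Casson} the Milnor invariants are concordance invariants, so $\overline{\mu}_L(J) = \overline{\mu}_T(J) = 0$ (modulo indeterminacy) for every multi-index $J$. The one point that deserves care is the indeterminacy: the Milnor invariant $\overline{\mu}_L(J)$ is a priori only defined modulo the subgroup generated by the Milnor invariants of proper sublinks of length less than $|J|$. However, since $L$ is Brunnian, every proper sublink of $L$ is trivial, and hence all of these lower-order invariants vanish. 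Thus the indeterminacy collapses and each $\overline{\mu}_L(J)$ is a well-defined integer, and the concordance invariance yields genuine vanishing rather than vanishing only modulo a nontrivial ideal.

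With both hypotheses verified, Theorem~\ref{free-homotopy} applies and concludes that $K$ is null-homotopic in $S^3\setminus(L-K)$, as required. I do not expect any serious obstacle; the only point requiring attention is the indeterminacy discussion above, which is resolved cleanly by the Brunnian hypothesis.
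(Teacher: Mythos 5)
Your proof is correct and follows the paper's own (one-line) derivation: the Brunnian hypothesis gives that $L-K$ is a trivial link, and sliceness plus Casson's concordance invariance gives $\overline{\mu}_L(I0)=0$ for all relevant $I$, so Theorem~\ref{free-homotopy} applies. One minor caveat: the indeterminacy of $\overline{\mu}_L(J)$ is a gcd of \emph{shorter-length} Milnor invariants of $L$ itself, not only of invariants of proper sublinks, so the clean way to see that a slice link's Milnor invariants genuinely vanish is induction on $|J|$ rather than the Brunnian hypothesis --- but this does not affect the conclusion.
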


\begin{rem}
Let $K$ be a slice knot which is non-trivial, and $K'$ the longitude
of a tubular neighbourhood of $K$. Then the 2-component link
$L=K\cup K'$ is a slice link. As we saw in Remark~\ref{remark1}~(1),
each component is not null-homotopic in the complement of the other.
Hence the Brunnian property in Corollary~\ref{brunnian-slice} is
necessary.
\end{rem}

A {\em $\Delta$-move} \cite{MN,Mat} is a local move on links as
illustrated in Figure~\ref{delta-move}. If the three strands in
Figure~\ref{delta-move} belong to the same component of a link, 
we call it a {\em self $\Delta$-move} \cite{Shi}. Two links are said 
to be {\em $\Delta$-equivalent} (resp. {\em self $\Delta$-equivalent}) 
if one can be transformed into the other by a finite sequence of
$\Delta$-moves (resp. self $\Delta$-moves). Note that self
$\Delta$-equivalence implies link-homotopy, i.e., if two links are
self $\Delta$-equivalent, then they are link-homotopic. For knots,
self $\Delta$-equivalence is the same as $\Delta$-equivalence.

It is known that a link $L$ in $S^3$ is self $\Delta$-equivalent to a 
trivial link if and only if $\overline{\mu}_L(I)=0$ for any $I$ with 
$r(I)\leq 2$ \cite[Corollary~1.5]{Yasu2}. Even if a link is self 
$\Delta$-equivalent to a trivial link, it is not necessarily true that
a certain component of the link is $\Delta$-equivalent to the trivial 
knot in the complement of the rest components, where a knot is 
{\em trivial} in the complement of a link if it bounds a disk disjoint 
from the link. We study $\Delta$-equivalence of knots in the complement 
of a certain link.
\pvcn
\begin{figure}[!h]
\includegraphics[trim=0mm 0mm 0mm 0mm, width=.45\linewidth]{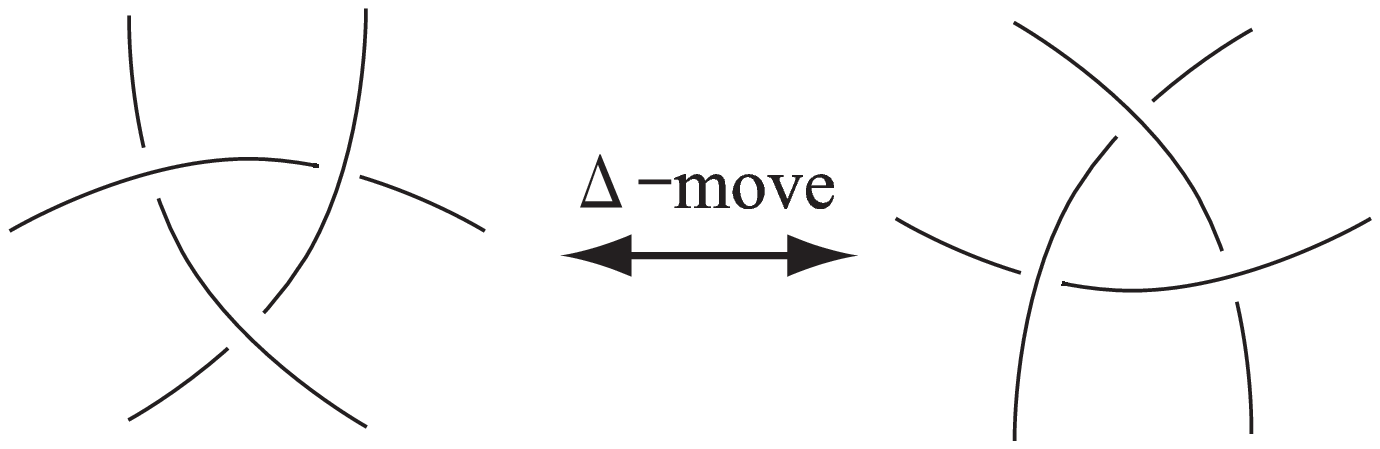}
\caption{}\label{delta-move}
\end{figure}

%Let $L$ be an $n$-component Brunnian link. 
%If each component $K$ is null-homotopic in $S^3\setminus(L-K)$,  
%then by Corollary~\ref{brunnian-free-homotopy}, $\overline{\mu}_L(I)=0$ 
%for any multi-index $I$ with $|I|\geq 2n-1$ and $r(I)\geq 2$.
%By \cite[Theorem~1.3]{Yasu2}, we have the following corollary.

%\begin{cor} Let $L$ and $L'$ be $n$-component Brunnian links such that 
%each components $K$ and $K'$ are null-homotopic in $S^3\setminus(L-K)$ 
%and $S^3\setminus(L'-K')$ respectively. Then $L$ and $L'$ are self 
%$\Delta$-equivalence if and only if 
%$\overline{\mu}_L(I)=\overline{\mu}_{L'}(I)$ for any multi-index $I$ with  
%$|I|=2n$ and each index appears in $I$ exactly twice. \end{cor}

The following theorem is comparable to Corollary~\ref{free-homotopy2}.

\begin{thm}\label{free-delta}
Let $L=K_0\cup K_1\cup\cdots\cup K_n$ be an $(n+1)$-component boundary 
link such that $L-K_0$ is a trivial link. Then $K_0$ is $\Delta$-equivalent 
to the trivial knot in $S^3\setminus(L-K_0)$. In particular, for any 
Brunnian, boundary link, each component is $\Delta$-equivalent to
the trivial knot in the complement of the rest components.
\end{thm}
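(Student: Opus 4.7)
The plan is to exploit the boundary link hypothesis to locate a Seifert surface for $K_0$ entirely in the complement of $L-K_0$, then reduce $K_0$ to the trivial knot via $\Delta$-moves supported in a regular neighborhood of this surface. By hypothesis $L$ is a boundary link, so there are mutually disjoint oriented surfaces $F_0,F_1,\dots,F_n\subset S^3$ with $\partial F_i=K_i$. In particular $F_0$ is a Seifert surface for $K_0$ with $F_0\subset S^3\setminus(L-K_0)$, and a sufficiently small regular neighborhood $N(F_0)$ also lies in $S^3\setminus(L-K_0)$. Consequently, any $\Delta$-move on $K_0$ supported in $N(F_0)$ is automatically a $\Delta$-move on $K_0$ in $S^3\setminus(L-K_0)$.

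Next, I would put $F_0$ into a disk-with-bands form: a 2-disk $D$ together with $2g$ attached oriented bands $b_1,\dots,b_{2g}$, where $g=\mathrm{genus}(F_0)$. The bands may be individually knotted and linked with each other, and these pathologies are exactly what is obstructing $F_0$ from being a disk. The plan is to run a standard simplification procedure on $(D,b_1,\dots,b_{2g})$, using $\Delta$-moves on $K_0=\partial F_0$ as the elementary step: first unknot each band $b_i$, then remove band--band and band--disk crossings between distinct bands, and finally cancel symplectically paired bands to reduce $g$. Iterating reduces $F_0$ up to isotopy in $N(F_0)$ to an embedded disk $D'\subset S^3\setminus(L-K_0)$, at which point $\partial D'=K_0$ is the trivial knot in the complement.

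The main obstacle is the diagrammatic lemma underlying the elementary step: given a self-crossing of a band, a crossing between two bands, or a crossing between a band and the disk, one must realize the change that resolves it by a single $\Delta$-move on three strands of $K_0$ contained in $N(F_0)$. The two boundary arcs of the offending band on $K_0$ supply two of the three required strands; the third is obtained from a nearby arc of $K_0$ lying on $\partial D$ or on another band, which can be arranged by a preparatory isotopy inside $N(F_0)$. Verifying this requires explicit pictures, but once it is in hand, a straightforward induction on the pair $(g,\text{number of band crossings in }F_0)$ completes the proof.

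Finally, the ``in particular'' assertion is immediate: if $L$ is Brunnian and a boundary link, then for every component $K$ of $L$, the sublink $L-K$ is trivial and $L$ remains a boundary link; so the first part of the theorem applied to $K$ gives that $K$ is $\Delta$-equivalent to the trivial knot in $S^3\setminus(L-K)$.
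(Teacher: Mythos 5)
Your argument has a fatal structural flaw: it never uses the hypothesis that $L-K_0$ is a trivial link, yet that hypothesis is essential. Indeed, a $\Delta$-move on the single component $K_0$ is in particular a homotopy of $K_0$ in the complement of the other components, so $\Delta$-equivalence to the trivial knot in $S^3\setminus(L-K_0)$ forces $K_0$ to be null-homotopic there. Remark~\ref{remark1}~(1) exhibits a $2$-component boundary link $K\cup K'$ ($K$ a nontrivial knot, $K'$ the longitude of its tubular neighbourhood) for which $K'$ is \emph{not} null-homotopic in $S^3\setminus K$; your proof, applied verbatim to this link with $K_0=K'$ and $F_0$ the pushed-off Seifert surface, would contradict this. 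The error sits exactly in your ``elementary step.'' Every move you perform is supported in $N(F_0)\cong F_0\times[0,1]$ and hence preserves the free homotopy class of $K_0$ in $N(F_0)$; but $\partial F_0$ represents the nontrivial commutator product $[x_1,y_1]\cdots[x_g,y_g]$ in the free group $\pi_1(F_0)$ when $g\geq 1$, so no sequence of moves supported in $N(F_0)$ can convert $K_0$ into an unknot bounding a disk there. In addition, a crossing change between two distinct bands $b_i$ and $b_j$ affects four strands of $K_0$ (two from each band); it is a band-pass move, not a three-strand $\Delta$-move, and your ``two strands from the offending band plus one nearby strand'' picture does not produce it.

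The paper's proof shows where the missing input enters. The triviality of $L-K_0$ is used through Corollary~\ref{free-homotopy2} to conclude that each core circle $C_i$ of the bands of $F_0$ is null-homotopic in $S^3\setminus(L-K_0)$; only then can the surface be carried into a $3$-ball $B^3\subset S^3\setminus(L-K_0)$ by band-pass moves. Even at that point one does not trivialize $K_0$ directly by $\Delta$-moves on the surface: one performs fissions to turn $K_0$ into an algebraically split link in $B^3$, applies the Murakami--Nakanishi theorem to $\Delta$-untie that link, reorders the fissions and $\Delta$-moves using Lemma~\ref{reorder}, and finishes with Lemma~\ref{ribbon} on ribbon disks. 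Your reduction scheme would need to be replaced by machinery of this kind; as written, the induction on $(g,\text{number of band crossings})$ has no valid base step. Your final paragraph on the Brunnian case is fine, but it rests on the unproved main assertion.
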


\begin{rem}
(1)~As we saw in Remark~\ref{remark1}~(1), there is a 2-component boundary 
link such that each component is not null-homotopic in the complement of
the other component. Since self $\Delta$-equivalence implies link-homotopy,
any component is not $\Delta$-equivalent to the trivial knot in the complement
of the other component. This implies that the condition, $L-K_0$ is trivial, 
in Theorem~\ref{free-delta} is essential. We also notice by \cite{SY, Yasu2} 
that $L$ is self $\Delta$-equivalent to a trivial link since $L$ is a boundary 
link.  \\
(2)~In the last section, we give a 3-component Brunnian link $L$ such that
$L$ is self $\Delta$-equivalent to a trivial link, and each component $K$ of 
$L$ is not  $\Delta$-equivalent to the trivial knot in $S^3\setminus(L-K)$ 
(Example~\ref{example2}). Since some Milnor invariants of $L$ are non-trivial, 
$L$ is not a boundary link. Hence the condition that $L$ is a boundary link 
in Theorem~\ref{free-delta} is necessary.
\end{rem}

%By Theorem~\ref{free-delta}, we have the following corollary.
%\begin{cor}
%For any Brunnian, boundary link $L$, each component $K$ is 
%$\Delta$-equivalent to the trivial knot in $S^3\setminus(L-K)$.
%\end{cor}

For an $n$-component link $L=K_1\cup\cdots\cup K_n$, we denote by $W^i(L)$
the link with the $i$th component Whitehead doubled. In particular $W^i(K_i)$ 
is the $i$th component of $W^i(L)$. Note that $L-K_i=W^i(L)-W^i(K_i)$.
Then we have the following relation between homotopy of a knot and
$\Delta$-equivalence of the Whitehead double of that knot in the 
complement of a trivial link.

\begin{thm}\label{homotopy-delta} $($cf. \cite[Theorem~1.4]{MY}$)$
Let $L=K_0\cup K_1\cup\cdots\cup K_n$ be an $(n+1)$-component link 
such that $L-K_0$ is a trivial link. The component $K_0$ is null-homotopic 
in $S^3\setminus(L-K_0)$ if and only if $W^0(K_0)$ is $\Delta$-equivalent 
to the trivial knot in $S^3\setminus(L-K_0)$.
\end{thm}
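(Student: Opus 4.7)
Set $X = S^3\setminus(L-K_0)$ and $L^* = W^0(K_0)\cup K_1\cup\cdots\cup K_n$. My plan is to prove the two implications separately: the forward direction geometrically, and the reverse by combining Theorem~\ref{free-homotopy} with a Milnor-invariant computation for $L^*$.

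For the forward implication, assume $K_0$ is null-homotopic in $X$. Then $K_0$ can be transformed into an unknot bounding a disk in $X$ by a finite sequence of self-crossing changes performed inside $X$. The key local claim is that each such self-crossing change on $K_0$ induces, in a small ball neighborhood of the crossing, an operation on $W^0(K_0)$ realizable by a short sequence of self $\Delta$-moves. Indeed, near a crossing of $K_0$ the Whitehead double appears as two parallel pairs of strands with four identically-signed crossings, and the induced \emph{band crossing change}, reversing all four crossings simultaneously, is realizable by two self $\Delta$-moves on $W^0(K_0)$; this is the statement that a $C_1$-move on $K_0$ lifts to a $C_2$-move on $W^0(K_0)$, and can also be verified directly by a diagrammatic calculation. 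Applying these moves throughout the sequence yields $W^0(U)$ for some unknot $U$ bounding a disk in $X$; a regular neighborhood of $U$ together with its bounding disk is contained in a ball $B\subset X$, and Murakami--Nakanishi's theorem \cite{MN} (that all knots in $S^3$ are $\Delta$-equivalent, via moves that can be confined to a neighborhood of the knot) gives that $W^0(U)$ is $\Delta$-equivalent to the trivial knot via moves supported inside $B\subset X$, completing this direction.

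For the reverse implication, assume $W^0(K_0)$ is $\Delta$-equivalent to the trivial knot in $X$. Since $X$ contains no other link components, every such $\Delta$-move is automatically a self $\Delta$-move on $W^0(K_0)$, and these moves transform $L^*$ into the trivial $(n+1)$-component link. By \cite[Corollary~1.5]{Yasu2}, this forces $\overline{\mu}_{L^*}(J) = 0$ for every multi-index $J$ with $r(J)\leq 2$. By Theorem~\ref{free-homotopy}, it remains to deduce $\overline{\mu}_L(I0) = 0$ for every multi-index $I$ with entries in $\{1,\ldots,n\}$. The plan here is to establish a Cochran-type doubling formula: since $L-K_0$ is trivial, $\pi_1(X)$ is free on the meridians of $L-K_0$, and the longitude of $W^0(K_0)$ in $\pi_1(S^3\setminus L^*)$ can be written explicitly in terms of the longitude of $K_0$ and a meridian of $K_0$; a Magnus-expansion comparison should then identify each $\overline{\mu}_L(I0)$ with a suitable $\overline{\mu}_{L^*}(J)$ satisfying $r(J)\leq 2$, from which the vanishing follows.

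The main obstacle is the reverse direction, specifically the precise form of the Cochran-type doubling formula and the verification that the identifying multi-indices $J$ always satisfy $r(J)\leq 2$. The forward direction's essential content is the local diagrammatic verification of the band-crossing lift, which is a once-and-for-all calculation. The reverse direction, by contrast, demands a careful Magnus-expansion (or clasper-calculus) argument; I would adapt the strategy of \cite[Theorem~1.4]{MY}, the cited model for the present theorem, to the multi-component setting.
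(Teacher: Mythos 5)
Your forward direction is essentially the paper's own argument (Proposition~\ref{whitehead}): a self-crossing change on $K_0$ lifts to a band-pass--type local move on $W^0(K_0)$ that is realized by $\Delta$-moves supported in a ball (the paper cites \cite{TY} for this), and the resulting double of an unknot contained with its disk in a ball of $S^3\setminus(L-K_0)$ is $\Delta$-trivial there. That half is sound.

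The reverse direction has a genuine gap, precisely at the step you flag as the main obstacle. From the hypothesis you correctly conclude that $L^*=W^0(L)$ is self $\Delta$-equivalent to a trivial link, but \cite[Corollary~1.5]{Yasu2} then yields only $\overline{\mu}_{L^*}(J)=0$ for $J$ with $r(J)\leq 2$, i.e.\ with \emph{every} index appearing at most twice. No doubling formula can identify $\overline{\mu}_L(I0)$ with such a $J$ once $I$ itself has a repeated entry: Whitehead doubling the component $K_0$ does not lower the multiplicities of the indices $1,\dots,n$, and the relevant formula (\cite[Theorem~1.1]{MY}) relates $\overline{\mu}_L(I0)$ to $\overline{\mu}_{L^*}(II00)$, in which each index of $I$ appears twice as often as in $I$. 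For instance, if the first nonvanishing invariant is $\overline{\mu}_L(1120)$ (compare Example~\ref{example1}, where the first nonvanishing invariant of the relevant component already has a repeated index), the target multi-index $11211200$ has $r=4$ and lies outside the range your step controls; so the plan of landing in $r(J)\leq 2$ cannot succeed. The paper's actual argument bypasses Corollary~1.5 entirely and instead uses Proposition~\ref{free-Ck-inv}: if $W^0(K_0)$ were $C_2$-equivalent (i.e.\ $\Delta$-equivalent) to the trivial knot in $S^3\setminus(L-K_0)$, then $\overline{\mu}_{L^*}(J)=0$ for every $J$ in which the index $0$ appears once or twice, with \emph{no} constraint on the other indices. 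Arguing by contraposition, Theorem~\ref{free-homotopy} produces an $I$ with $\overline{\mu}_L(I0)\neq 0$, \cite[Theorem~1.1]{MY} gives $\overline{\mu}_{L^*}(II00)\neq 0$, and since $0$ appears exactly twice in $II00$ this contradicts Proposition~\ref{free-Ck-inv}. The missing ingredient in your proposal is therefore not a sharper doubling formula but this stronger relative vanishing statement, which constrains only the multiplicity of the doubled component's index.
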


It is known that concordance implies link-homotopy \cite{Gif,Gol}
and it does not necessarily imply self $\Delta$-equivalence
\cite[Claim~4.5]{NS-JKTR00}. Now we consider an equivalence relation
on links combining self $\Delta$-equivalence and concordance. Two
links $L$ and $L'$ are {\em self-$\Delta$ concordant} if there is a
sequence $L=L_1,...,L_m=L'$ of links such that $L_i$ and $L_{i+1}$
are either concordant or self $\Delta$-equivalent for each
$i\in\{1,...,m-1\}$. Links up to self $\Delta$-equivalence and
concordance have been studied in %\cite{NSY-PAMS06}, \cite{TS-KBJM07}, 
\cite{TS-MOIT07a}, and \cite{Yasu}. Classification of {\em string links} 
up to self-$\Delta$ concordance is given by the last author \cite{Yasu}. 
In \cite{TS-KBJM07} and \cite{TS-MOIT07a}, the second  author defined an 
equivalence relation, {\em $\Delta$-cobordism}. It is not hard to see 
that two links are $\Delta$-cobordant if and only if they are
self-$\Delta$ concordant.

We consider self-$\Delta$ concordance of a certain component of a link 
while fixing the rest of the components. i.e., self-$\Delta$ concordance 
of knots in the complement of a certain link. Two knots $K$ and $K'$ in 
the complements of a link $L$ are {\em self-$\Delta$ concordant $($ or 
$\Delta$ concordant $)$ in $S^3\setminus L$} if there is a sequence 
$K=K_1,...,K_m=K'$ of knots such that $K_i$ and $K_{i+1}$ are either 
$\Delta$-equivalent or concordant in $S^3\setminus L$ for each 
$i\in\{1,...,m-1\}$, where $K_i$ and $K_{i+1}$ are concordant in 
$S^3\setminus L$ if there is an annulus $A$ in $(S^3\setminus L)\times[0,1]$
with $(\partial((S^3\setminus L)\times[0,1]),\partial A)=((S^3\setminus L)
\times\{0\}, K_i)\cup(-(S^3\setminus L)\times\{1\},-K_{i+1})$. For knots 
in the complement of the trivial knot in $S^3$, we have the following.

\begin{thm}\label{delta-concordance}
Let $K$ and $K'$ be knots in the complement of the trivial knot $O$ in $S^3$.
If $\mathrm{lk}(K,O)=\mathrm{lk}(K',O)=\pm 1$, then $K$ and $K'$ are $\Delta$ 
concordant in $S^3\setminus O$.
\end{thm}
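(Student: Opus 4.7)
The plan is to reduce both $K$ and $K'$ to a common standard model, a meridian $\mu$ of $O$, via a combination of concordance and $\Delta$-equivalence in $S^{3}\setminus O$, and conclude that they are $\Delta$-concordant to each other. Without loss of generality I take $\mathrm{lk}(K,O)=\mathrm{lk}(K',O)=+1$; the case $-1$ is identical with orientations reversed. Since $\pi_{1}(S^{3}\setminus O)\cong\mathbb{Z}$ is generated by the class of $\mu$, and both $K$ and $\mu$ represent the generator, $K$ is freely homotopic to $\mu$ in $S^{3}\setminus O$; the same holds for $K'$ and $\mu$.

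The key step is to upgrade this free homotopy to a concordance at the cost of a local knot. Given a free homotopy $H\colon S^{1}\times I\to S^{3}\setminus O$ from $K$ to $\mu$, the graph map $(x,t)\mapsto(H(x,t),t)$ defines an immersed annulus in $(S^{3}\setminus O)\times I$ with boundary $K\cup\mu$. After a generic perturbation its only singularities are finitely many transverse double points, and each can be desingularized by a standard 4-dimensional surgery: add a local tube to separate the two intersecting sheets, then compress the resulting handle, paying the price of a connect sum with a local knot on one of the boundary components. After resolving every double point one obtains a smooth concordance in $S^{3}\setminus O$ from $K$ to $\mu\#J$ for some knot $J\subset S^{3}$ supported in a $3$-ball disjoint from $O$. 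The same argument applied to $K'$ yields a concordance from $K'$ to $\mu\#J'$ with $J'\subset S^{3}$ in a ball disjoint from $O$.

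It remains to absorb the local knots by $\Delta$-equivalence. By Murakami--Nakanishi \cite{MN}, every knot in $S^{3}$ is $\Delta$-equivalent to the trivial knot, and the moves realizing this equivalence can be performed inside a $3$-ball. Placing that ball disjoint from $O$, I conclude that $\mu\#J$ and $\mu\#J'$ are each $\Delta$-equivalent to $\mu$ in $S^{3}\setminus O$. Concatenating the equivalences yields
\[K\;\sim_{\mathrm{conc}}\;\mu\#J\;\sim_{\Delta}\;\mu\;\sim_{\Delta}\;\mu\#J'\;\sim_{\mathrm{conc}}\;K',\]
which is exactly the $\Delta$-concordance asserted by the theorem.

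The main obstacle is the promotion of a free homotopy in $S^{3}\setminus O$ to a concordance in $S^{3}\setminus O$ modulo a local knot. The resolution-of-double-points trick in four dimensions is standard, but one must verify carefully that the local knots $J$ and $J'$ can be arranged to sit inside $3$-balls disjoint from $O$, so that the subsequent $\Delta$-moves remain in the complement of $O$; this compatibility is what allows the concordance and $\Delta$-equivalence pieces to glue into a single $\Delta$-concordance in $S^{3}\setminus O$.
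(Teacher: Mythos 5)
Your first and third steps are fine (free homotopy classes in $S^3\setminus O$ are classified by the winding number since $\pi_1\cong\mathbb{Z}$, and \cite{MN} does let you undo a local knot by $\Delta$-moves inside a ball missing $O$). The gap is the second step: the claim that a generic immersed annulus traced out by a free homotopy can be converted into an \emph{embedded} concordance at the cost of connect-summing a local knot into one boundary component. Resolving a transverse double point by tubing replaces two disks by an annulus and therefore raises the genus of the surface by one; it yields a cobordism, not a concordance. To get back to an annulus you would need to compress the new handle, and an embedded compressing disk does not exist in general --- the obstruction to such a compression is exactly the sort of thing detected by $\overline{\mu}(1122)$. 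There is no ``standard'' desingularization that trades a double point for a local knot in $(S^3\setminus O)\times[0,1]$ (tricks of that kind need a dual sphere, which is unavailable here).

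The decisive point is that your argument nowhere uses the hypothesis $\mathrm{lk}(K,O)=\pm1$ except to identify the target of the free homotopy. Run verbatim with $\mathrm{lk}(K,O)=0$, it would show that every null-homotopic knot in $S^3\setminus O$ is concordant to a knot contained in a ball disjoint from $O$, hence (after \cite{MN}) $\Delta$ concordant to the trivial knot in $S^3\setminus O$. This is false: for the Whitehead link $W=K\cup O$ one has $\overline{\mu}_W(1122)\neq 0$, which is invariant under both concordance and self $\Delta$-equivalence, so $K$ is not $\Delta$ concordant to the trivial knot in $S^3\setminus O$ (Remark~\ref{remark6}~(2)). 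So the desingularization step cannot be correct as stated. The paper's proof takes a different and genuinely $3$-dimensional route in which the linking number $\pm1$ is used essentially: by Lemma~\ref{lem:n-fission} and Proposition~\ref{prp:fission}, a careful choice of fission bands along a meridian disk splits $K$ into one copy of the core of $V=S^3\setminus O$ together with an \emph{algebraically split} link in a ball; the latter is killed by $\Delta$-moves \cite{MN}, and Lemma~\ref{reorder} reorders the fissions past the $\Delta$-moves so that the fissions, followed by refusing with a trivial link, assemble into a ribbon concordance to the core. You would need to supply an argument of comparable substance (or a correct reference) for your homotopy-to-concordance-modulo-local-knots claim, and any such argument must break down at winding number $0$.
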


\begin{rem}\label{remark6}
(1)~Let $K\cup O$ be the link illustrated in Figure~\ref{example2TSY}, where 
$O$ is the trivial knot and $K$ is a trefoil. Let $H=O'\cup O$ be the Hopf 
link with linking number one. Note that $\mathrm{lk}(K,O)=\mathrm{lk}(O',O)=1$.
It follows from \cite[Proposition~2]{N-O} that $K\cup O$ is not self 
$\Delta$-equivalent to $H$. While $K$ is neither $\Delta$-equivalent nor 
concordant to $O'$ in $S^3\setminus O$, the theorem above implies that they 
are $\Delta$ concordant in $S^3\setminus O$. \\
(2)~Let $W=K\cup O$ be the Whitehead link. Then $\overline{\mu}_W(1122)\neq 0$. 
Since $\overline{\mu}(1122)$ is invariant under both self $\Delta$-equivalence 
\cite{FY} and concordance \cite{Casson}, $K$ is not $\Delta$ concordant to be
trivial in $S^3\setminus O$. This implies Theorem~\ref{delta-concordance} does 
not hold for $\mathrm{lk}(K,O)=\mathrm{lk}(K',O)=0$. Moreover, in
Example~\ref{example4}, we show that for any $p~(|p|\geq 2)$, there are two 
links $K\cup O$ and $K'\cup O$ with $\mathrm{lk}(K,O)=\mathrm{lk}(K',O)=p$ 
such that $K\cup O$ and $K'\cup O$ are not self-$\Delta$ concordant. 
In particular, $K$ and $K'$ are not $\Delta$ concordant in $S^3\setminus O$. 
Hence the condition $\mathrm{lk}(K,O)=\mathrm{lk}(K',O)=\pm 1$ is essential.
\end{rem}

\begin{figure}[!h]
\includegraphics[trim=0mm 0mm 0mm 0mm, width=.25\linewidth]{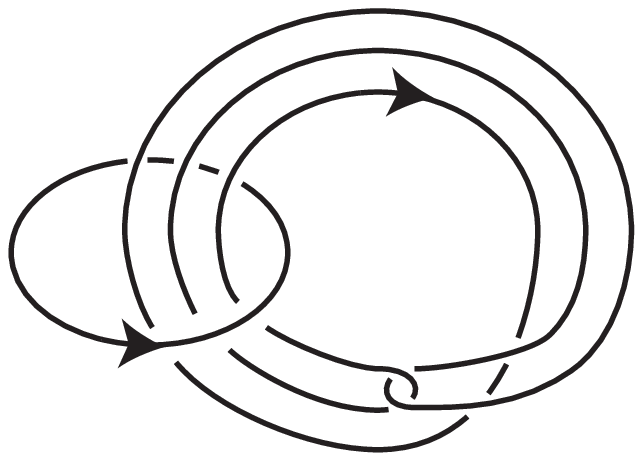}
        \caption{}\label
{example2TSY}
\end{figure}

Let $V_1 \cup\cdots\cup V_n$ be a regular neighborhood of a link
$\Gamma=\gamma_1 \cup\cdots\cup \gamma_n$ in $S^3$. Let ${k}_i$ be a knot 
in an unknotted solid torus $\widetilde{V}_i \subset S^3$ such that ${k}_i$ 
is not contained in a $3$-ball in $\widetilde{V}_i$ $(i=1, \cdots, n)$. 
Let $l_i$ be the linking number of ${k}_i$ and a meridian of $\widetilde{V}_i$.
Let $\phi_i:\widetilde{V}_i \to V_i$ be a homeomorphism which maps a preferred
longitude of $\widetilde{V}_i$ onto a preferred longitude of $V_i$. We call
the image $L=K_1 \cup\cdots\cup K_n=\phi_1({k}_1) \cup\cdots\cup \phi_n({k}_n)$ 
a {\it componentwise satellite link of type  $(\Gamma;l_1,...,l_n)$} and 
$\Gamma$ the {\it companion of} $L$. The link in Figure~\ref{example2TSY} is
a componentwise satellite link of type  $(H;1,1)$ for the Hopf link $H$ with 
linking number one. 
If $l_1=\cdots=l_n=1$, then by Theorem~\ref{delta-concordance}, each $k_i$ 
is $\Delta$ concordant to the core of $\widetilde{V}_i$ in $\widetilde{V}_i$. 
Hence we have the following.

\begin{cor}\label{thm:core}
Let $L$  be a componentwise satellite link of type  $(\Gamma;1,...,1)$.
Then $L$ is self-$\Delta$ concordant to its companion $\Gamma$.
\end{cor}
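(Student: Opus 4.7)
The plan is to apply Theorem~\ref{delta-concordance} inside each solid torus $\widetilde{V}_i$ separately and then transport the resulting $\Delta$ concordance into $V_i$ via $\phi_i$, using that the solid tori $V_1,\dots,V_n$ are pairwise disjoint to keep the operations on different components independent.

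First, I fix an index $i$ and consider $k_i\subset\widetilde{V}_i$. The complement $S^3\setminus\widetilde{V}_i$ is an unknotted solid torus whose core $O_i$ is an unknot, and by hypothesis $\mathrm{lk}(k_i,O_i)=l_i=1$. The core $\tilde{c}_i$ of $\widetilde{V}_i$ likewise satisfies $\mathrm{lk}(\tilde{c}_i,O_i)=1$. Applying Theorem~\ref{delta-concordance} in $S^3$ with trivial knot $O_i$, I conclude that $k_i$ and $\tilde{c}_i$ are $\Delta$ concordant in $S^3\setminus O_i=\widetilde{V}_i$.

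Next, I push this $\Delta$ concordance forward by the homeomorphism $\phi_i:\widetilde{V}_i\to V_i$ to obtain a sequence of knots in $V_i$ starting at $K_i=\phi_i(k_i)$ and ending at $\phi_i(\tilde{c}_i)$, which is isotopic inside $V_i$ to the core $\gamma_i$ (both being cores of the solid torus $V_i$). Each consecutive pair in this sequence is related either by a self $\Delta$-equivalence or by a concordance supported entirely inside $V_i$, respectively $V_i\times[0,1]$.

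Finally, since $V_1,\dots,V_n$ are disjoint, any self $\Delta$-equivalence supported in $V_i$ is automatically a self $\Delta$-equivalence of the full link $L$ (the three strands involved all lie on $K_i$), and any concordance of $K_i$ in $V_i\times[0,1]$ extends by the identity product on the other components to a concordance of $L$. Iterating this replacement for $i=1,\dots,n$ transforms $L$ through a chain of concordances and self $\Delta$-equivalences into $\gamma_1\cup\cdots\cup\gamma_n=\Gamma$, proving that $L$ is self-$\Delta$ concordant to $\Gamma$. There is no substantial obstacle beyond verifying this disjointness bookkeeping; the genuine content lies in Theorem~\ref{delta-concordance}.
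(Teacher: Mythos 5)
Your proposal is correct and follows exactly the route the paper takes: the paper deduces the corollary in the sentence preceding its statement by applying Theorem~\ref{delta-concordance} to each $k_i$ in $\widetilde{V}_i$ (with $O_i$ the core of the complementary solid torus) and then transporting via $\phi_i$, using disjointness of the $V_i$ to assemble the componentwise $\Delta$ concordances. You have merely made explicit the bookkeeping that the paper leaves implicit.
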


\begin{rem}\label{remark7}
(1)~Let $L$ be an $n$-component link which is a componentwise satellite
link of type $(\Gamma;l_1,...,l_n)$. Suppose that $\Gamma$ is self-$\Delta$ 
concordant to a trivial link $O$. It is not hard to see that if $\Gamma$ is 
concordant to a link $\Gamma'$, then $L$ is concordant to a link which is a 
componetwise satellite link of type $(\Gamma';l_1,...,l_n)$. This and 
\cite[Proposition~1]{S-Y} imply that $L$ is self-$\Delta$ concordant to a 
link $L'$ which is a componetwise satellite link of type $(O;l_1,...,l_n)$. 
Since each component of $L'$ is separated from the rest components by 
a $2$-sphere, it is $\Delta$-equivalent to the trivial knot \cite{MN}.
This implies that $L'$ is self $\Delta$-equivalent to $O$.
Hence $L$ and $O$ are self-$\Delta$ concordant for any $l_1,...,l_n$.\\
(2)~Let $L$ be a $2$-component link which is a componentwise satellite
link of type $(\Gamma;p,q)$. Then we have that
$\overline{\mu}_L(12)=pq\overline{\mu}_{\Gamma}(12)$ and
$\overline{\mu}_L(1122)=p^2q^2\overline{\mu}_{\Gamma}(1122)$ \cite[Lemma 1]{S-Y}.
Where  $\overline{\mu}(12)$ and $\overline{\mu}(1122)$ are Milnor invariants, 
which are known to be concordance invariants \cite{Casson} and self 
$\Delta$-equivalence invariants \cite{FY}. Suppose that $\Gamma$ is not 
self-$\Delta$ concordant to a trivial link. Then by \cite[Corollary~1.5]{Yasu},
either $\overline{\mu}_{\Gamma}(12)$ or $\overline{\mu}_{\Gamma}(1122)$ is 
nontrivial. Hence if $L$ and $\Gamma$ are self-$\Delta$ concordant, then $|pq|=1$.
\end{rem}

Corollary~\ref{thm:core} implies the following.

\begin{cor}\label{thm:linkcore}
Let $L$ and $L'$ be componentwise satellite links of type
$(\Gamma;\varepsilon_1,...,\varepsilon_n)$ and
$(\Gamma';\varepsilon_1,...,\varepsilon_n)~(\varepsilon_i\in\{-1,1\})$,
respectively. Then $L$ and $L'$ are self-$\Delta$ concordant if and only if
their companions $\Gamma$ and $\Gamma'$ are self-$\Delta$ concordant.
\end{cor}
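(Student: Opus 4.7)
The plan is to reduce the general case $\varepsilon_i\in\{-1,1\}$ to the all-positive case $\varepsilon_i=1$ treated in Corollary~\ref{thm:core}, via a consistent reversal of orientations on the components where $\varepsilon_i=-1$.

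Concretely, given $L=K_1\cup\cdots\cup K_n$ of type $(\Gamma;\varepsilon_1,\ldots,\varepsilon_n)$, let $\widehat{L}$ denote the link obtained from $L$ by reversing the orientation of $K_i$ for every index $i$ with $\varepsilon_i=-1$. Reversing $K_i=\phi_i(k_i)$ amounts to replacing $k_i$ by $k_i^{-}$ in the satellite construction, which negates the linking number of $k_i$ with a meridian of $\widetilde{V}_i$. Hence $\widehat{L}$ is a componentwise satellite link of type $(\Gamma;1,\ldots,1)$ with the same companion $\Gamma$, and Corollary~\ref{thm:core} gives that $\widehat{L}$ is self-$\Delta$ concordant to $\Gamma$. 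The analogous construction, applied to $L'$, yields $\widehat{L}'$ self-$\Delta$ concordant to $\Gamma'$.

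The remaining observation is that self-$\Delta$ concordance is preserved when one simultaneously reverses the orientations of the same set of components in two links: a self $\Delta$-move on three strands of one component is a local picture (Figure~\ref{delta-move}) symmetric under orientation reversal of those strands, and a concordance annulus between two links remains a concordance annulus after reversing both of its boundary circles (reverse the orientation of the annulus itself). Therefore $L$ and $L'$ are self-$\Delta$ concordant if and only if $\widehat{L}$ and $\widehat{L}'$ are. Combined with the two self-$\Delta$ concordances $\widehat{L}\sim\Gamma$ and $\widehat{L}'\sim\Gamma'$ from the previous paragraph and transitivity of the relation, this delivers the desired equivalence $L\sim L'\iff\Gamma\sim\Gamma'$. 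The only point requiring care is verifying cleanly that orientation reversal on a prescribed set of components commutes with both $\Delta$-moves and concordance; once this is recorded, the rest of the argument is a direct application of Corollary~\ref{thm:core}.
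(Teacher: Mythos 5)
Your proposal is correct and follows the same route the paper intends: the paper derives Corollary~\ref{thm:linkcore} directly from Corollary~\ref{thm:core}, and your reduction --- reversing the orientation of each component with $\varepsilon_i=-1$ to obtain satellites of type $(\Gamma;1,\ldots,1)$ and $(\Gamma';1,\ldots,1)$, then noting that simultaneous orientation reversal of corresponding components preserves both $\Delta$-moves and concordance --- is precisely the missing bookkeeping that makes that implication explicit.
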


\begin{rem}
(1)~Let $\Gamma$ be a $2$-component link which is not self-$\Delta$ 
concordant to a trivial link. Let $L$ and $L'$ be componentwise satellite 
links of type $(\Gamma;p,q)$ and $(\Gamma;p',q')$, respectively.
By Remark~\ref{remark7} (2), if $L$ and $L'$ are self-$\Delta$ concordant,
then $|pq|=|p'q'|$. \\
(2)~In Example~\ref{example4}, we show that for any $p~(|p|\geq 2)$, there 
are two links $L$ and $L'$ that are not self-$\Delta$ concordant, but are 
both componentwise satellite links of type $(H;1,p)$ for the Hopf link $H$.
\end{rem}

\section{Proof of Theorem~\ref{free-homotopy}}

In order to prove Theorem~\ref{free-homotopy}, we need the following 
lemma which is a direct corollary of \cite[Theorem~5.6]{MKS}.

\begin{lem}\label{trivial} $($\cite[Theorem~5.6]{MKS}$)$
Let $F(r)=\langle x_1,...,x_r\rangle$ be the free group of rank $r$.
An element $w\in F(r)$ is trivial if and only if the Magnus expansion 
$E(w)$ of $w$ is equal to $1$.
\end{lem}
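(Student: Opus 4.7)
The plan is to establish the injectivity of the Magnus homomorphism $E\colon F(r)\to\mathbb{Z}\langle\langle X_1,\ldots,X_r\rangle\rangle$, since the ``only if'' direction is immediate: $E$ is defined multiplicatively by $x_i\mapsto 1+X_i$ and $x_i^{-1}\mapsto 1-X_i+X_i^2-\cdots$, so $E(1)=1$ is automatic. All the content lies in the converse.

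I would work with the filtration of the target. Let $I$ be the two-sided ideal generated by $X_1,\ldots,X_r$, and let $U_n:=1+I^n$. These are subgroups of the units of $\mathbb{Z}\langle\langle X_1,\ldots,X_r\rangle\rangle$ with $[U_n,U_m]\subseteq U_{n+m}$. The core step, which is precisely the substance of Theorem~5.6 of \cite{MKS}, is to prove by induction on $n$ the identity
\[
E^{-1}(U_{n+1})=\gamma_{n+1}F(r),
\]
where $\gamma_k F(r)$ denotes the $k$th term of the lower central series. The inclusion $\supseteq$ follows from a direct calculation: a length-$n$ simple commutator in the $x_i$ maps under $E$ to an element of $U_n$ whose degree-$n$ component equals the corresponding Lie bracket in the $X_i$. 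The inclusion $\subseteq$ uses the Magnus--Witt identification of the associated graded Lie algebra $\bigoplus_n \gamma_n F(r)/\gamma_{n+1}F(r)$ with the free Lie algebra on $r$ generators, sitting faithfully inside the tensor algebra.

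Granted this identity, the lemma follows at once from residual nilpotency of $F(r)$, i.e., $\bigcap_{n\geq 1}\gamma_n F(r)=\{1\}$, which is itself a consequence of the identity above applied to all $n$. Indeed, if $E(w)=1$, then $E(w)\in U_{n+1}$ for every $n$, so $w\in\gamma_{n+1}F(r)$ for every $n$, whence $w=1$.

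The main obstacle is the nontrivial direction $E^{-1}(U_{n+1})\subseteq\gamma_{n+1}F(r)$: one must show that if $w\notin\gamma_{n+1}F(r)$ then the degree-$n$ part of $E(w)-1$ is nonzero. This reduces to the injectivity of the canonical embedding of the free Lie algebra into the free associative algebra, which is classical but the technical heart of the argument; the rest of the proof is a formal combination of this with residual nilpotency.
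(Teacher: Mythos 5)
Your reduction has a genuine circularity at its final step. From the identity $E^{-1}(U_{n+1})=\gamma_{n+1}F(r)$ for all $n$, what you actually obtain is
$\ker E=E^{-1}\bigl(\bigcap_n U_{n+1}\bigr)=\bigcap_n\gamma_{n+1}F(r)$;
this shows that injectivity of $E$ and residual nilpotency of $F(r)$ are \emph{equivalent} (granting the identity), but it proves neither. Your claim that $\bigcap_{n}\gamma_n F(r)=\{1\}$ is ``itself a consequence of the identity applied to all $n$'' therefore begs the question: to extract triviality of that intersection from the identity you would already need $E$ to be injective, which is the statement under proof. In the classical development (Magnus; \cite[Chapter~5]{MKS}) the logical order is the reverse of yours: injectivity of $E$ is proved first, by a direct coefficient computation, and residual nilpotency of free groups is then deduced as a corollary. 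To repair your outline you would have to import an independent proof of residual nilpotency (say, via free groups being residually finite $p$-groups), at which point the lower central series, the free Lie algebra, and the Magnus--Witt identification are heavy scaffolding doing no essential work.

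The paper's proof shows how little is actually needed: write a nontrivial element as a freely reduced word $w=x_{i_1}^{p_1}\cdots x_{i_s}^{p_s}$ with $p_j\neq 0$ and $i_k\neq i_{k+1}$, note that $E(x_i^{p})=1+pX_i+X_i^2f_i$ for a power series $f_i$ in $X_i$, and observe that in the product of these $s$ factors the monomial $X_{i_1}\cdots X_{i_s}$, having no two adjacent equal indices, can only be assembled by taking the linear term from every factor; its coefficient is therefore $p_1\cdots p_s\neq 0$, so $E(w)\neq 1$. This direct exhibition of a single nonzero coefficient is precisely the step your proposal defers to ``classical'' machinery, and it is the entire content of the lemma.
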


Although the lemma above follows from \cite[Theorem~5.6]{MKS}, the proof 
is very short, and so we include it here for the reader's convenience.

\begin{proof}
The \lq only if' part is obvious. We show \lq if' part. The proof is
essentially the same as the proof of \cite[Theorem~5.6]{MKS}.

Let $w=x_{i_1}^{p_1}\cdots x_{i_s}^{p_s}$ be a freely reduced word which 
represents a nontrivial element, where $p_j$ are non-zero integers and 
$1\leq i_k\neq i_{k+1}\leq r$. It is not hard to see that for any $i$ 
and $p$ \[E(x_i^p)=1+p X_i + X_i^2f_i,\] where $f_i$ is an infinite 
power series in $X_i$. This implies that 
\[E(w)=(1+p_1 X_{i_1}+X_{i_1}^2 f_{i_1})
\cdots (1+p_s X_{i_s}+X_{i_s}^2f_{i_s}).\] 
Since $1\leq i_k\neq i_{k+1}\leq r$, the coefficient of
$X_{i_1}\cdots X_{i_s}$ is $p_1\cdots p_s(\neq 0)$. 
Hence $E(w)\neq 1$. This completes the proof.
\end{proof}

\begin{proof}[Proof of Theorem~\ref{free-homotopy}]
First we show the \lq only if' part.
Suppose that $K_0$ is null-homotopic in $S^3\setminus(L-K_0)$.
Let $L'$ be a link obtained from $L$
by taking a number of zero-framed parallels of $K_i~(i=1,...,n)$.
Then $K_0$ is also null-homotopic in $S^3\setminus(L'- K_0)$.
In particular, $L'$ is link-homotopic to a trivial link.
Hence all Milnor's link-homotopy invariants of $L'$ vanish.
By \cite[Theorem~7]{Milnor2}, $\overline{\mu}_L(I0)=0$ 
for any multi-index $I$ with entries from $\{1,...,n\}$.

Now we show \lq if' part.
Set $G(L)=\pi_1(S^3-L)$ and $G_q(L)~(q\geq 1)$ the $q$th lower central 
subgroup of $G(L)$. There is the natural homomorphism from $G(L)/G_q(L)$ 
to $G(L-K_0)/G_q(L-K_0)$ so that the $i$th meridians $m_{i}~(i=1,...,n)$ 
of $L$ map to the $i$th meridians $m'_{i}$ of $L-K_0$, and the $0$th 
meridian $m_{0}$ maps to the trivial element $1$.
Let $l$ be the $0$th longitude of $L$.
Then $l$ is written as a word $w_l(m_0,m_1,...,m_n)$ in $G(L)/G_q(L)$ 
and a word $w_l(m'_1,...,m'_n)$ in $G(L-K_0)/G_q(L-K_0)$.
We note that $w_l(1,m_1,...,m_n)$ sends to $w_l(m'_1,...,m'_n)$ via 
the homomorphism above.

The Magnus expansion $E(w_l(1,m_1,...,m_n))$ 
can be obtained from the expansion
\[E(w_l(m_0,m_1,...,m_n))=1+\sum\mu_{L}(h_1...h_s 0)X_{h_1}\cdots X_{h_s}\]
by substituting $0$ for $X_0$.
Hence by the assumption that $\ov{\mu}_L(I0)=0$ for any multi-index 
$I$ with entries from $\{1,...,n\}$, we have
\[E(w_l(1,m_1,...,m_n))=E(w_l(m'_1,...,m'_n))=1.\]
Since $G(L-K_0)$ is a free group, by Lemma~\ref{trivial}, 
$l$ is trivial in $G(L-K_0)$.
\end{proof}

\section{Proof of Theorem~\ref{free-delta}}

%Before proving Theorem~\ref{free-delta}, we need some preparation.

Let $L=K_1\cup\cdots\cup K_n$ be an $n$-component link in a $3$-manifold 
$M$ and $B\subset M$ a band attaching a single component $K_i$ with 
coherent orientation, i.e., $B\cap L=K_i\cap B\subset \partial B$ 
consists of two arcs whose orientations from $K_i$ are opposite to those 
from $\partial B$. Then $L'=(L\cup\partial B)-\mathrm{int}(B\cap K_i)$, 
which is an $(n+1)$-component link, is said to be obtained from $L$ by 
{\em fission}  (along a band $B$) in $M$, and conversely $L$ is said to 
be obtained from $L'$ by {\em fusion} (along a band $B$) in $M$ \cite{KSS}.

The following lemma is shown in \cite{Yasu}.

\begin{lem}\label{reorder} $($\cite[Lemma~3.5]{Yasu}$)$
Let $L_1, L_2, L_3$ be links such that $L_2$ is obtained from $L_1$ by 
a single fission, and that $L_3$ is obtained from $L_2$ by a single 
self $\Delta$-move. Then there is a link $L_2'$ such that $L_2'$ is 
obtained from $L_1$ by a single self $\Delta$-move, and that $L_3$ is 
obtained from $L'_2$ by a single fission. Here we call a $\Delta$-move 
a self $\Delta$-move if the three strands belong to a link obtained from 
a single component by fission.
\end{lem}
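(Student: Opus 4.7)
The plan is to show that the fission and the subsequent self $\Delta$-move can be performed in the opposite order by exhibiting a common position in which their supports are geometrically disjoint. Let $B$ be the band realizing the fission, with $B\cap L_1=\alpha_1\cup\alpha_2\subset K_i$ and $\partial B=\alpha_1\cup\beta_1\cup\alpha_2\cup\beta_2$, so that $L_2=(L_1-\mathrm{int}(\alpha_1\cup\alpha_2))\cup\beta_1\cup\beta_2$ and $K_i$ splits into $K_i'\cup K_i''$. Let $D$ be the $3$-ball supporting the self $\Delta$-move $L_2\rightsquigarrow L_3$, so that $D\cap L_2$ consists of three arcs $a_1,a_2,a_3$ all lying in $K_i'\cup K_i''$.

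My plan has two steps. Step (i): using ambient isotopy of $L_2$ together with the freedom to replace $B$ by any band ambient-isotopic to it (which yields the same fission up to isotopy of the resulting link), I would arrange that $B\cap D=\emptyset$. In particular all four arcs $\alpha_1,\alpha_2,\beta_1,\beta_2$ and the interior of $B$ lie outside $D$, so the three $\Delta$-move strands $a_1,a_2,a_3$ are forced to lie on the portion of $K_i'\cup K_i''$ coming unchanged from $K_i$ (not on $\beta_1\cup\beta_2$). Step (ii): with this disjointness in hand, the three strands $a_1,a_2,a_3$ are already present as three arcs on the single component $K_i$ of $L_1$ inside $D$. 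Performing the very same local replacement inside $D$ on $L_1$ is therefore a self $\Delta$-move of $L_1$; call the result $L_2'$. Outside $D$ we have $L_2'=L_1$, and in particular $\alpha_1\cup\alpha_2\subset L_2'$ and the band $B$ still attaches along $\alpha_1\cup\alpha_2$. Performing fission along $B$ on $L_2'$ then replaces $\alpha_1\cup\alpha_2$ by $\beta_1\cup\beta_2$ outside $D$ while leaving the three new $\Delta$-move arcs inside $D$ untouched, which is exactly $L_3$.

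The main obstacle is step (i), the geometric disjointness. If we naively fix everything, the arcs $\beta_1,\beta_2$ are genuine pieces of $L_2$ and may pass through the specified ball $D$; equivalently, the $\Delta$-move could a priori involve strands on $\beta_1\cup\beta_2$. The key to overcoming this is that $D$ is only defined up to ambient isotopy supporting the local move, and $B$ is only defined up to isotopy rel the combinatorial data of attachment (any two such bands produce isotopic fissions). This reduces the problem to a general-position argument for a $2$-disk $B$ and a $3$-ball $D$ in $S^3$ whose boundary data can be simultaneously adjusted; one slides the band along $K_i\cup K_i'\cup K_i''$ away from the $\Delta$-move region, and correspondingly isotopes $D$ to a nearby ball disjoint from the new $B$. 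Once disjointness is achieved, step (ii) is the straightforward verification above that both paths $L_1\to L_2\to L_3$ and $L_1\to L_2'\to L_3$ agree outside $D$ (where only the fission acts) and inside $D$ (where only the $\Delta$-move acts).
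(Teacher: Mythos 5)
The paper does not actually prove this lemma; it quotes it verbatim from \cite[Lemma~3.5]{Yasu}, where the argument realizes the self $\Delta$-move as surgery along a clasper (equivalently, a band-sum with a small copy of the Borromean rings) and exploits the freedom to slide the \emph{leaves} of that clasper along the components $K_i'\cup K_i''$. Your overall shape --- disjoin the two supports, then commute --- is the right one, and your Step~(ii) is fine once disjointness is in hand. The problem is that Step~(i), which is the entire content of the lemma, is not established by what you write. First, ``general position'' cannot separate a $2$-disk $B$ from a $3$-ball $D$ in $S^3$: the codimensions do not add up, so generically they meet in a $2$-dimensional set, and an actual topological argument is required. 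Second, your proposed mechanism --- slide the band along $K_i\cup K_i'\cup K_i''$ and ``correspondingly'' isotope $D$ --- is circular: an ambient isotopy that carries $B$ to a new position while preserving $L_1$ also carries $L_2=(L_1-\mathrm{int}(\alpha_1\cup\alpha_2))\cup\beta_1\cup\beta_2$, and hence the ball $D$ and its three strands, along with it, so the relative position of $B$ and $D$ is unchanged; if instead you hold $D$ fixed and move only $B$, you must re-prove that fission along the new band still produces $L_3$ from $L_2'$, which you do not do.

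The genuinely hard case makes this concrete: if one of the three strands $a_j$ of the $\Delta$-move lies on $\beta_1\cup\beta_2\subset\partial B$, then $\partial B\cap D\supset a_j$ is forced and \emph{no} repositioning of $B$ compatible with the given $L_2$ can achieve $B\cap D=\emptyset$. What must move is the $\Delta$-move itself: one realizes $L_3$ as the result of surgery on a $\Delta$-clasper whose three leaves grab strands of $K_i'\cup K_i''$, and slides those leaves along the components (a point-pushing ambient isotopy preserving $L_2$ setwise, hence preserving $L_3$ up to isotopy) until they grab strands of $K_i\cap L_2$ away from $\partial B$. Only after that does one clear the rest of the clasper off $B$, and here too a specific feature is needed: intersections of the clasper with $B$ can be pushed along $B$ and off across $\mathrm{int}(\alpha_1\cup\alpha_2)$, which is legitimate precisely because those arcs are \emph{not} part of $L_2$, whereas pushing across $\beta_1\cup\beta_2$ would be a forbidden crossing with the link. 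This leaf-sliding idea, and the distinction between the $\alpha$- and $\beta$-sides of $\partial B$, are the missing ingredients; without them your Step~(i) does not go through.
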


The proof of the following lemma is an easy modification of
the proof of \cite[Theorem]{Shi} (or \cite[Theorem~2]{NSY}).

\begin{lem}\label{ribbon}
Let $K_0\cup K_1\cup\cdots\cup K_n$ be an $(n+1)$-component link.
If $K_0$ bounds a ribbon disk $($a singular disk with only ribbon 
singularities$)$ in $S^3\setminus(L-K_0)$, then $K_0$ is 
$\Delta$-equivalent to the trivial knot in $S^3\setminus(L-K_0)$.
\end{lem}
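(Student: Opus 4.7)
My plan is to follow the proof of Shibuya's theorem \cite[Theorem]{Shi} (or equivalently \cite[Theorem~2]{NSY}), taking care that every band and every self $\Delta$-move used in the argument can be arranged to lie in $S^3\setminus(L-K_0)$. The key point is that the original argument is local to a regular neighborhood of the ribbon disk, so once the ribbon disk itself sits in $S^3\setminus(L-K_0)$, the whole argument takes place there.

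First I would fission $K_0$ using the ribbon structure. Let $D$ be the given ribbon disk with ribbon singularities $c_1,\dots,c_m$. Choose narrow bands $b_1,\dots,b_m$ in $D$ meeting each $c_j$, and fission $K_0$ along these bands; the result is a link $T=T_0\cup\cdots\cup T_m$ in $S^3\setminus(L-K_0)$ bounding the disjoint embedded disks obtained by cutting $D$ along $b_1,\dots,b_m$. In particular $T$ is a trivial link in $S^3\setminus(L-K_0)$.

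Next I would introduce a model fission of a trivial knot. Pick a trivial knot $O$ in a small ball disjoint from $L-K_0$ and fission it along $m$ standard, unknotted, disjoint bands inside that ball to produce a trivial link $T_{\mathrm{std}}$ of $m+1$ components. Both $T$ and $T_{\mathrm{std}}$ bound disjoint embedded disks in $S^3\setminus(L-K_0)$, and a standard disk-shrinking argument shows that any two such links are ambient isotopic in $S^3\setminus(L-K_0)$, so I may assume $T=T_{\mathrm{std}}$. The knots $K_0$ and $O$ are then recovered by two different systems of fusion bands on the same trivial link $T$.

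It remains to convert one system of fusion bands into the other by self $\Delta$-moves, which is the content of Shibuya's argument. Concretely, sliding one band over another, or pushing a band across a disk bounded by a component of $T$, can be realized by a single self $\Delta$-move combined with ambient isotopy; Lemma~\ref{reorder} then lets us commute all fissions to the end so that the required moves act on the knot $K_0$ itself. All of these local moves happen inside a regular neighborhood of $D\cup b_1\cup\cdots\cup b_m$ together with the standard model bands, hence inside $S^3\setminus(L-K_0)$. The main obstacle is this last step --- verifying that Shibuya's band-sliding argument runs verbatim inside the complement of $L-K_0$ --- but this is precisely the ``easy modification'' alluded to in the statement, amounting to the observation that no step in the original proof requires an ambient move outside a regular neighborhood of the ribbon disk.
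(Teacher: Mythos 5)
Your reduction of $K_0$ to a fusion of a trivial link $T$ bounding disjoint embedded disks in $M=S^3\setminus(L-K_0)$, and the identification of $T$ with a standard trivial link $T_{\mathrm{std}}$ in a small ball $B$, are both fine. The gap is in the last step: once $T$ has been standardized and fixed, it is in general \emph{impossible} to convert the fusion-band system coming from the ribbon disk into the standard one by self $\Delta$-moves and ambient isotopy in $M$. Each fusion band determines, via its core, an element of $\pi_1(M)$, well defined up to sliding its ends along $T_{\mathrm{std}}$ and conjugation inside $B$, and none of your moves changes this element: isotopy in $M$ does not; pushing the band across a component $T_i$ of $T_{\mathrm{std}}$ or across another band multiplies it by the class of $T_i$ or of a meridian of a band core, and these are all trivial in $\pi_1(M)$ because they bound disks in $M$. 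For the standard system these elements are trivial, but for the system coming from the ribbon disk they need not be. Concretely, let $n=1$, let $K_1$ be an unknot, and let $D=D_0\cup\beta\cup D_1$ be a ribbon disk for $K_0$ whose single band $\beta$ leaves a small ball containing $D_0\cup D_1$, runs once around $K_1$, and returns, piercing $\mathrm{int}\,D_0$ once. The conclusion of the lemma holds here, yet after standardizing $T$ the surviving fusion band represents a generator of $\pi_1(M)\cong\mathbb{Z}$ and can never be made standard relative to the fixed $T_{\mathrm{std}}$. So the step you defer to \cite{Shi} is not merely unverified; in the form you have set it up it is false, and it is not what the cited argument does.

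The repair is to change the endgame. Do not compare band systems over a fixed trivial link. Instead remove the ribbon singularities of $D$ in place: each singularity is a passage of a band of $D$ through the interior of a subdisk, and pushing that band off across the boundary of the subdisk is a pass of two antiparallel strands across strands that all belong to $K_0$, hence is realized by self $\Delta$-moves supported in small balls near $D$, in particular inside $M$; Lemma~\ref{reorder} is then used exactly as you propose, to turn these into $\Delta$-moves on the knot $K_0$ itself. The result is a knot $K_0'$ that is $\Delta$-equivalent to $K_0$ in $M$ and bounds an \emph{embedded} disk in $M$. By the paper's definition of triviality in the complement of a link (bounding a disk disjoint from the link), $K_0'$ is already the trivial knot in $M$; no standardization of the disk's position relative to $L-K_0$ is needed, and the $\pi_1$ obstruction above never arises.
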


Now we are ready to prove Theorem~\ref{free-delta}. 
The proof is given by combining Corollary~\ref{free-homotopy2}, 
and Lemmas~\ref{reorder} and \ref{ribbon}.

\begin{proof}[Proof of Theorem~\ref{free-delta}]
Let $F_0\cup F_1\cup\cdots\cup F_n$ be a disjoint union of
orientable surfaces with $\partial F_i=K_i~(i=0,1,...,n)$ and
$F_i\cap F_j=\emptyset~(i\neq j)$. Let $G$ be a bouquet graph 
which is a spine of $F_0$, i.e., $G$ consists of $2g$ loops
$C_1,...,C_{2g}$ and a point $P$ with $C_i\cap C_j=P~(i\neq j)$, 
and $G$ is a deformation retract of $F_0$, where $g$ is the genus 
of $F_0$. We may assume that $F_0$ consists of a disk $D$ and bands
$b_1,...,b_{2g}$ so that $D$ contains $P$ and $b_i\cup D$ is an annulus 
with the core $C_i$ for each $i$. By Corollary~\ref{free-homotopy2}, 
each $C_i$ is homotopic to $P$ in $S^3\setminus(L-K_0)$. Hence $G$ is 
homotopic to $P$ in $S^3\setminus(L-K_0)$ with $P$ fixed. This implies 
that $F_0$ can be transformed into a surface $F'_0$ that is contained in 
a $3$-ball $B^3\subset S^3\setminus(L-K_0)$ by {\em band-pass moves} 
between $b_i$ and $b_j~(1\leq i\leq j\leq 2g)$ as illustrated in
Figure~\ref{band-pass}. Therefore $\partial F_0=K_0$ can be transformed
into an {\em algebraically split} link $L_0$ in $B^3$ by a finite
sequence of fissions as illustrated in Figure~\ref{fission}, where a
link is algebraically split if the linking numbers of its all
$2$-component sublinks vanish. Hence $L_0$ is $\Delta$-equivalent to a
trivial link in $B^3$ \cite{MN}. It follows from Lemma~\ref{reorder}
that there is a knot $K'_0$ such that $K'_0$ is $\Delta$-equivalent to 
$K_0$ in $S^3\setminus(L- K_0)$ and is transformed into a trivial link by 
a finite sequence of fissions in $S^3\setminus(L- K_0)$. We note that 
$K'_0$ is a ribbon knot and $K'_0$ bounds a ribbon disk in 
$S^3\setminus(L-K_0)$. This and Lemma~\ref{ribbon} imply that $K'_0$ 
is $\Delta$-equivalent to the trivial knot in $S^3\setminus(L-K_0)$.
This completes the proof.
\end{proof}
\vspace{-7mm}
\begin{figure}[!h]
\includegraphics[trim=0mm 0mm 0mm 0mm, width=.4\linewidth]{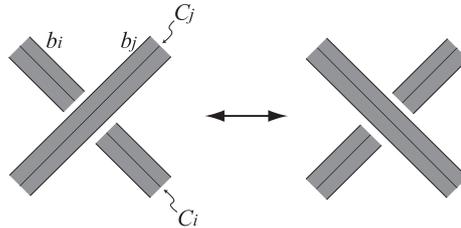} 
\caption{Band-pass moves between $b_i$ and $b_j$}\label{band-pass}
\end{figure}
\vspace{-3mm}
\begin{figure}[!h]
\includegraphics[trim=0mm 0mm 0mm 0mm, width=.4\linewidth]{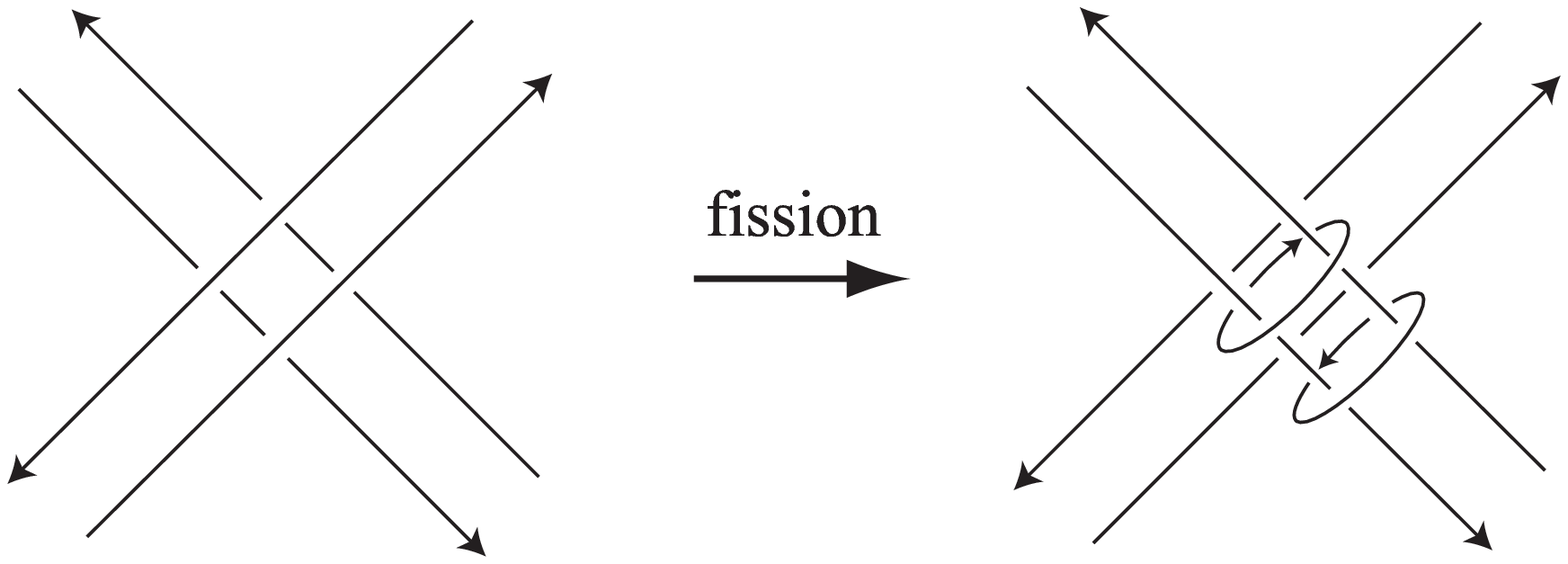}
\caption{}\label{fission}
\end{figure}

\section{Proof of Theorem~\ref{homotopy-delta}}

Habiro \cite{H} and Goussarov \cite{G} independently introduced the
notion of a $C_k$-move. A $C_k$-move is a local move on links as
illustrated in Figure \ref{Ck-move}, which can be regarded as a kind
of `higher order crossing change'. In particular, a $C_1$-move is a
crossing change and a $C_2$-move is a $\Delta$-move. We call a
$C_k$-move a {\em self $C_k$-move} if all the strands belong to the
same component of a link. The (self) $C_k$-move generates an
equivalence relation on links, called \emph{$($self$)$
$C_k$-equivalence}, which becomes finer as $k$ increases. This
notion can also be defined by using the theory of claspers \cite{H}.

\begin{figure}[!h]
\includegraphics[trim=0mm 0mm 0mm 0mm, width=.75\linewidth]{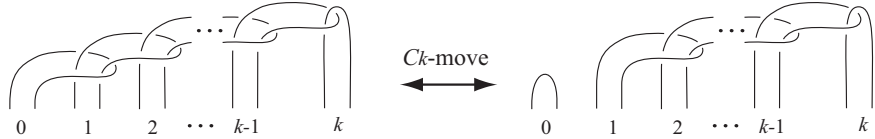}
\caption{A $C_k$-move involves $k+1$ strands of a link, labelled here 
with the integers from $0$ to $k$.}\label{Ck-move}
\end{figure}

The first and the last authors \cite{FY} showed that any Milnor invariant
$\overline{\mu}(I)$ with $r(I)\leq k$ is a self $C_k$-equivalence invariant.
The proof of \cite[Theorem~1.1]{FY} implies the following proposition.
Note that this proposition is a generalization of the \lq only if' part of
Theorem~\ref{free-homotopy}.

\begin{prop}\label{free-Ck-inv}
Let $L$ be an $n$-component link.
If the $i$th component $K$ is $C_k$-equivalent to the trivial knot in 
$S^3\setminus(L-K)$, then $\overline{\mu}_L(I)=0$ for any multi-index $I$ 
with entries from $\{1,...,n\}$ such that the index $i$ appears in $I$ at 
least once and at most $k$ times.
\end{prop}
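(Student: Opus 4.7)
The plan is to reduce Proposition~\ref{free-Ck-inv} to the self $C_k$-equivalence invariance of Milnor invariants with $r(J)\leq k$ established in \cite[Theorem 1.1]{FY}, by using the classical parallel-component formula of \cite[Theorem 7]{Milnor2} to convert the hypothesis on $K=K_i$ into a self-equivalence statement on a suitably enlarged link.

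Given a multi-index $I$ with entries in $\{1,\ldots,n\}$ in which $i$ appears $s$ times with $1\leq s\leq k$, we form a link $L'$ from $L$ by introducing sufficiently many $0$-framed parallels of each component $K_j$ ($j\neq i$) and labelling them so that the copy $I'$ of $I$ in the enlarged index set has every entry other than $i$ occurring at most once. By \cite[Theorem 7]{Milnor2} we then have $\overline{\mu}_{L'}(I')=\overline{\mu}_L(I)$, and by construction $r(I')=s\leq k$. Next we transfer the $C_k$-moves: by choosing the tubular neighborhoods used to define the parallels thin enough, we may assume that the finite sequence of $C_k$-moves reducing $K_i$ to a trivial knot in $S^3\setminus(L-K_i)$ takes place in $S^3\setminus(L'-K_i)$. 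Since every strand involved in each such $C_k$-move then lies on the single component $K_i$ of $L'$, the sequence becomes a sequence of \emph{self} $C_k$-moves on $L'$.

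Applying \cite[Theorem 1.1]{FY} to $I'$, with $r(I')\leq k$, we conclude that $\overline{\mu}_{L'}(I')$ is unchanged by these self $C_k$-moves. After the moves $K_i$ bounds a disk in $S^3\setminus(L'-K_i)$, so the resulting link is the split union of the unknot and $L'-K_i$; because $I'$ contains the index $i$, the standard vanishing of Milnor invariants of split links forces $\overline{\mu}_{L'}(I')=0$, and hence $\overline{\mu}_L(I)=0$, which is what we need.

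The main technical obstacle we anticipate is the transfer step, i.e.\ realising the given $C_k$-moves in the smaller complement $S^3\setminus(L'-K_i)$ rather than in $S^3\setminus(L-K_i)$. Because $C_k$-moves are local and only finitely many of them occur, this is handled by picking the tubular neighborhoods of the components $K_j$ ($j\neq i$) thin enough to be disjoint from the (fixed, compact) supports of the sequence of moves; everything else in the argument is routine bookkeeping using the Magnus-expansion definition of the Milnor invariants together with the two cited results.
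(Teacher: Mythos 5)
Your argument is correct, but it reaches the conclusion by a genuinely different route from the one the paper intends. The paper gives no written proof of Proposition~\ref{free-Ck-inv}: it asserts that the statement follows from \emph{the proof of} \cite[Theorem~1.1]{FY}, meaning one is expected to open up the clasper argument there and observe that a $C_k$-move all of whose strands lie on the $i$th component already preserves every $\overline{\mu}(I)$ in which $i$ appears at most $k$ times, with no hypothesis on the multiplicities of the other indices. You instead use \cite[Theorem~1.1]{FY} only as a black box (which requires $r(I)\leq k$, i.e., a bound on \emph{every} index) and remove the restriction on the other indices by cabling: you pass to $L'$ with enough zero-framed parallels of the $K_j$ ($j\neq i$) so that the lifted index $I'$ satisfies $r(I')\leq k$, localize the $C_k$-moves into $S^3\setminus(L'-K_i)$ by a compactness/thin-tube argument, apply the black-box invariance, use the splitness of the terminal link, and descend via \cite[Theorem~7]{Milnor2}. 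This is precisely the $k$-fold generalization of the paper's own proof of the \lq only if' part of Theorem~\ref{free-homotopy} (the $k=1$ case, done there with link-homotopy in place of self $C_k$-equivalence), so it is faithful to the paper's methods even though it is not the route the authors indicate for general $k$. What your version buys is independence from the internals of \cite{FY}; what it costs is the extra cabling step and the localization of the moves away from the parallels, both of which you identify and handle correctly. One point that both your write-up and the paper leave implicit: the identity $\overline{\mu}_{L'}(I')=\overline{\mu}_L(I)$ and the invariance under self $C_k$-moves are equalities of residue classes modulo the usual Milnor indeterminacy, so the vanishing should formally be organized as an induction on $|I|$; this is routine bookkeeping and matches the level of detail in the paper's own use of \cite[Theorem~7]{Milnor2} in Section~2.
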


The \lq only if' part of Theorem~\ref{homotopy-delta} holds for more 
general setting as follows. Let $W^{i}(L)$ be the link obtained from 
$L$ by Whitehead doubling the $i$th component of $L$.

\begin{prop}\label{whitehead}
Let $L=K_0\cup K_1\cup\cdots\cup K_n$ be an $(n+1)$-component link.
If $K_0$ is null-homotopic in $S^3\setminus(L-K_0)$, then $W^0(K_0)$ is  
$\Delta$-equivalent to the trivial knot in $S^3\setminus(L-K_0)$.
\end{prop}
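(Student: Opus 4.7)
The plan is to reduce the problem via self-crossing changes on $K_0$, each of which lifts to a $\Delta$-equivalence on $W^0(K_0)$, and then to invoke Lemma~\ref{ribbon}.

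First, since $K_0$ is null-homotopic in $S^3\setminus(L-K_0)$, it bounds a singular disk with only transverse self-intersections in that complement. Reading off each self-intersection as a self-crossing change yields a finite sequence of self-crossing changes transforming $K_0$ into a knot $K_0'$ that bounds an embedded disk $D'\subset S^3\setminus(L-K_0)$. This is the standard fact that null-homotopy of a knot in a $3$-manifold is equivalent to being convertible to a locally trivial knot by self-crossing changes.

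The crucial local step I would carry out is to show that a single self-crossing change on $K_0$ in a small ball $B\subset S^3\setminus(L-K_0)$ corresponds to a $\Delta$-equivalence on $W^0(K_0)$ supported in a neighborhood of $B$. After isotoping $W^0(K_0)$ so that its Whitehead clasp lies outside $B$, the intersection $W^0(K_0)\cap B$ consists of two pairs of parallel strands with one pair crossing over the other, and the crossing change on $K_0$ amounts to reversing which pair lies on top. A direct diagrammatic argument, in the spirit of \cite[Theorem~1.4]{MY}, realizes this reversal by a $\Delta$-move (or a short sequence of $\Delta$-moves) among these four strands. This local picture analysis is the main obstacle of the proof; everything else is assembly.

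Iterating the local step along the sequence of crossing changes from the first paragraph, $W^0(K_0)$ becomes $\Delta$-equivalent in $S^3\setminus(L-K_0)$ to $W^0(K_0')$. A regular neighborhood of $D'$ is a $3$-ball $B_0\subset S^3\setminus(L-K_0)$ containing $W^0(K_0')$, inside which $W^0(K_0')$ is the Whitehead double of an unknot. By construction, this knot bounds a disk with a single ribbon singularity coming from the Whitehead clasp, and that ribbon disk, being contained in $B_0$, is disjoint from $L-K_0$. Hence Lemma~\ref{ribbon} applies and gives that $W^0(K_0')$, and therefore $W^0(K_0)$, is $\Delta$-equivalent to the trivial knot in $S^3\setminus(L-K_0)$.
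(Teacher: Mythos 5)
Your proposal is correct and follows essentially the same route as the paper: a self-crossing change on $K_0$ induces a local move on the doubled strands of $W^0(K_0)$ that is realized by $\Delta$-moves (the paper cites Taniyama--Yasuhara for this local step rather than re-deriving it), so the null-homotopy of $K_0$ reduces $W^0(K_0)$, up to $\Delta$-equivalence in $S^3\setminus(L-K_0)$, to the Whitehead double of a trivial knot. The paper finishes by simply observing that this double is itself trivial, so your concluding appeal to a ribbon disk and Lemma~\ref{ribbon} is a harmless, slightly more roundabout substitute for that last observation.
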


\begin{proof}
Let $K_0'$ be a knot obtained from $K_0$ by a single crossing change in
$S^3\setminus(L-K_0)$. Then $W^0(K'_0)$ is obtained from $W^0(K_0)$ by 
a local move as illustrated in Figure~\ref{delta-pass}, which is realized 
by $\Delta$-move (for example see \cite{TY}) in $S^3\setminus(L-K_0)$.
It follows that $W^0(K_0)$ is $\Delta$-equivalent to a Whitehead doubled
trivial knot, which is also trivial, in $S^3\setminus(L-K_0)$.
This completes the proof.
\end{proof}

\begin{figure}[!h]
\includegraphics[trim=0mm 0mm 0mm 0mm, width=.4\linewidth]{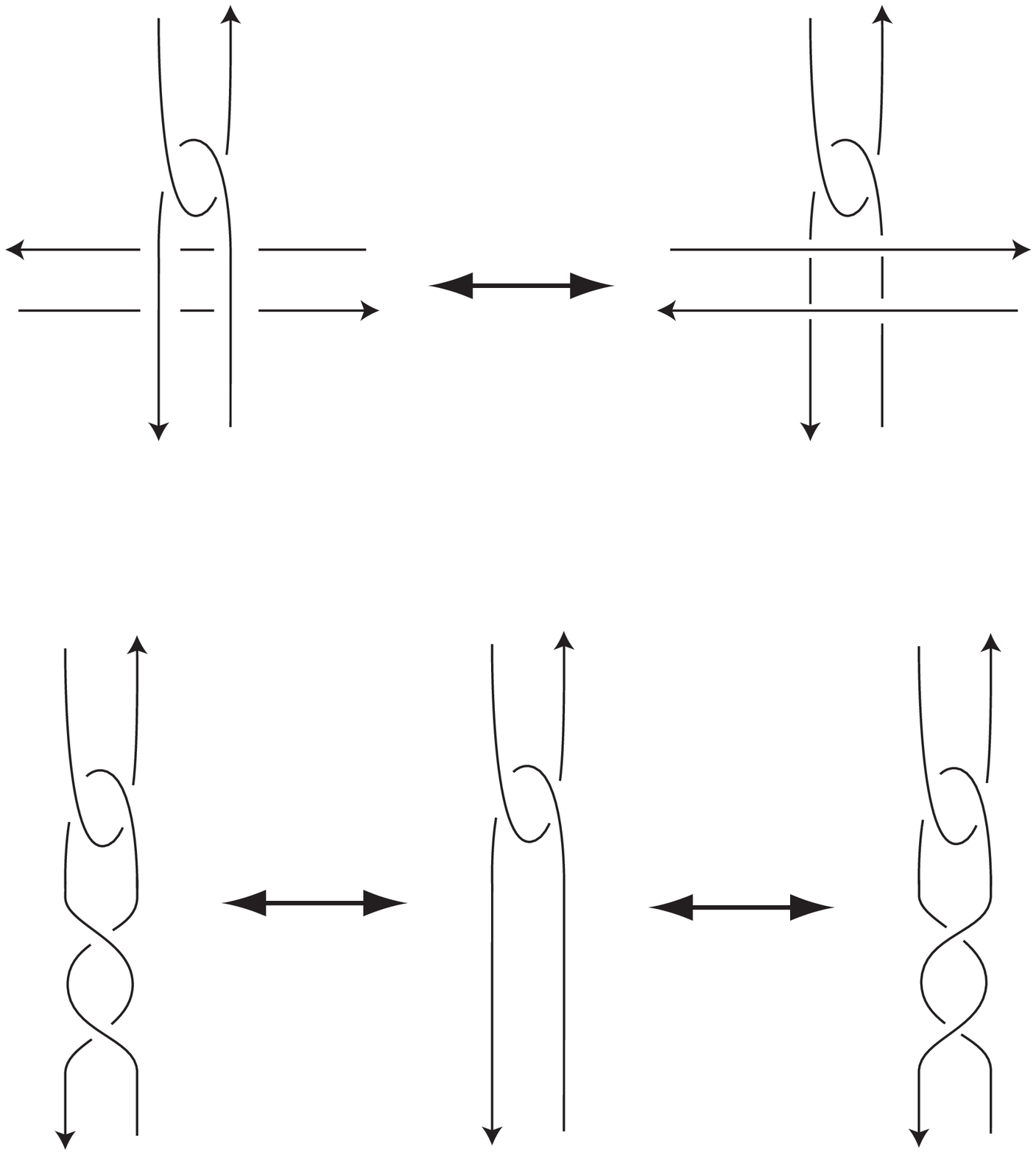}
\caption{}\label{delta-pass}
\end{figure}

\begin{proof}[Proof of Theorem~\ref{homotopy-delta}]
The \lq only if' part follows from Proposition~\ref{whitehead}.

We show the \lq if' part.
Suppose that $K_0$ is not null-homotopic in $S^3\setminus(L-K_0)$. Then, 
by Theorem~\ref{free-homotopy}, there is a multi-index $I$ with entries 
from $\{1,...,n\}$ such that $\overline{\mu}_L(I0)\neq 0$. This and 
\cite[Theorem~1.1]{MY} imply that $\overline{\mu}_{W^0(L)}(II00)\neq 0$.
Proposition~\ref{free-Ck-inv} completes the proof.
\end{proof}

\section{Proof of Theorem~\ref{delta-concordance}}

Theorem~\ref{delta-concordance} follows from the proposition below.

\begin{prop}\label{prp:fission}
Let $K$ be a knot in a solid torus $V \subset S^3$ with a 
meridian disk $M$ such that $K$ intersects $M$ transversely. 
Assume that $\mathrm{lk}(\partial M,K)=p \neq 0$ and that 
$|M\cap K|=|p|+2q~(q>0)$. Then by performing $(|p|+q)$ 
fissions in $V$, $K$ can be transformed into $L_1 \cup L_2$
that satisfies the following: $L_1$ is $p$ zero-framed parallels 
of the core $c$ of $V$, and $L_2$ is an algebraically split link 
with $(q+1)$-components in a $3$-ball in $V-L_1$. The curves in 
$L_1$ have orientation consistent with $V$ if $p$ is positive, 
and the opposite orientation if $p$ is negative.
\end{prop}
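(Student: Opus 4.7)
My plan is to construct the $|p|+q$ bands for the fissions explicitly in two stages. Without loss of generality assume $p>0$, so that $K\cap M$ consists of $p+q$ positive and $q$ negative transverse intersections.

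In the first stage, I would pair each of the $q$ negative points with a positive point via $q$ mutually disjoint arcs $\gamma_1,\ldots,\gamma_q$ in $M$; the $p$ remaining positive points $P_1,\ldots,P_p$ are designated to become the $p$ core-parallels of $L_1$. For each $i$, push $\gamma_i$ slightly off $M$ to a fixed side (under a local product structure $V\cong D^2\times S^1$ with $M=D^2\times\{0\}$) and thicken it to a band $B_i$ in $V$ whose two feet lie on $K$ just off $M$ near the endpoints of $\gamma_i$. Because $K$ crosses $M$ in opposite directions at the two endpoints of $\gamma_i$, the tangent vectors of $K$ at the feet of $B_i$ are antiparallel, and $B_i$ has coherent orientation. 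The fission along $B_i$ splits off a small component $S_i$ lying in a small $3$-ball on the pushed-off side of $M$ near $\gamma_i$, and leaves a modified main component with a ``bump'' that crosses $M$ only at the two paired points; pushing this bump across $M$ by an ambient isotopy cancels those two intersections. Carrying this out for $i=1,\ldots,q$ with mutually disjoint supports produces a main component $K'$ meeting $M$ transversely in exactly $P_1,\ldots,P_p$, together with $q$ small components $S_1,\ldots,S_q$ in $q$ disjoint $3$-balls disjoint from $K'$ and from $M$.

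In the second stage, I would perform $p$ more fissions to split $K'$ into $p$ zero-framed parallels $c_1,\ldots,c_p$ of the core together with one further component $S_{q+1}$ in a $3$-ball disjoint from the $c_i$. This rests on the claim that $K'$, a knot of winding number $p$ meeting $M$ minimally, admits a band-sum decomposition into $c_1\cup\cdots\cup c_p\cup S_{q+1}$ along $p$ mutually disjoint bands $b_1,\ldots,b_p$ in $V$. Performing the $p$ fissions along $b_1,\ldots,b_p$ reverses this band-sum. Combined with $S_1,\ldots,S_q$ from Stage~1, we obtain $L_1=c_1\cup\cdots\cup c_p$ and $L_2=S_1\cup\cdots\cup S_{q+1}$; the latter is algebraically split because each $S_i$ bounds a disk in a ball disjoint from the other components, and the $q+1$ disjoint $3$-balls can be amalgamated into a single $3$-ball via tubes in the connected complement $V\setminus L_1$.

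The main obstacle is justifying the band-sum decomposition of $K'$ in Stage~2. A band between two same-sign crossings placed near $M$ is never coherently oriented, since $K'$ flows in the same direction through $M$ at both crossings; the bands $b_i$ must therefore be routed through regions of $V$ where $K'$'s tangent directions are antiparallel. The existence of such a decomposition, which is the technical heart of the proposition, is obtained by first putting $K'$ into a standard ``cable-like'' position---a $p$-fold braided parallel of the core joined by $p-1$ small bands, with any residual knottedness confined to a ball---and then attaching $S_{q+1}$ via a single extra band with its framing chosen so that the $c_i$ emerge as zero-framed parallels of the core.
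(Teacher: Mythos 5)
The decisive gap is in Stage 1, where you assert that the fission along $B_i$ splits off a small component $S_i$ lying in a small $3$-ball near $\gamma_i$, so that $S_1,\ldots,S_q$ end up in disjoint balls disjoint from the main component. That is not what a fission does. The component split off by $B_i$ is the entire subarc $\alpha_i$ of $K$ running from $a_i$ to $b_i$, closed up by one side of $\partial B_i$; this arc can be long, knotted, and can wind through the rest of $K$ and through the other $\alpha_j$, so the $S_i$ may link one another, the main component $K'$, and (after Stage 2) the residual component $S_{q+1}$ with arbitrary linking numbers. Consequently your justification that $L_2$ is algebraically split --- ``each $S_i$ bounds a disk in a ball disjoint from the other components'' --- collapses, and with it the amalgamation of the $q+1$ balls into one. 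This is exactly where the paper's proof does its real work (Lemma~\ref{lem:n-fission}): the fission arcs $\gamma_i$ are not an arbitrary pairing of opposite-sign points but are corrected by band-summing with small meridian circles $\partial D_j$ about remaining intersection points, each correction changing one pairwise linking number $\mathrm{lk}(K^{(i)},K^{(j)})$ by $\pm1$ while disturbing only lexicographically later entries; minimizing the vector of $|\mathrm{lk}(K^{(i)},K^{(j)})|$ in the lexicographic order then forces all these linking numbers to vanish, i.e.\ $K'\cup L'$ is algebraically split. Once that is in hand, the residual component $R$ of Stage 2 automatically satisfies $\mathrm{lk}(R,S_i)=\mathrm{lk}(K',S_i)-\sum_j\mathrm{lk}(c_j,S_i)=0$, since the core parallels $c_j$ are disjoint from the ball containing the $S_i$. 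Your proposal contains no mechanism at all for killing these linking numbers.

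Stage 2, by contrast, is essentially the paper's argument, and the difficulty you single out there is not where the trouble lies. The paper pushes the cut-open solid torus $V-\mathrm{int}\,N(M)$ into the interior of $V$ to obtain a ball $B^3$ containing $L'$ and all the knotting of $K'$, so that $K'\cap\overline{V-B^3}$ consists of $|p|$ parallel product arcs; for each such arc a band near $\partial B^3$ joining its entrance to its exit is coherently oriented, because the knot runs into $B^3$ at one foot and out of it at the other, and fission along these $|p|$ bands produces the $p$ zero-framed core parallels together with one residual component in $B^3$. So the orientation obstacle you worry about resolves itself once everything but the $|p|$ parallel strands is isotoped into a ball; the genuine technical heart of the proposition is the algebraic splitness discussed above, not the cabling. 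A minor further point: you should also say why the pairing in Stage 1 can be chosen so that each $\alpha_i$ meets $M$ only in its endpoints (take an innermost adjacent pair of opposite signs along $K$ and iterate), since this is what guarantees that each $S_i$ lies in $V-M$ and that the main component carries all remaining intersections.
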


In order to prove Proposition~\ref{prp:fission}, we need the following lemma.

\begin{lem}\label{lem:n-fission}
Let $K$ and $M$ be as in Proposition~\ref{prp:fission}. 
There is a sequence of $q$ fissions that transforms $K$ 
into an algebraically split link $K' \cup L'$ such that $K'$ 
is a knot with $|\mathrm{lk}(\partial M,K')|=|M\cap K'|=|p|$ 
and $L'$ is a $q$-component link in $V-M$.
\end{lem}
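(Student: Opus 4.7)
The plan is to induct on $q$. For the inductive step, I would perform a single fission that removes a pair of oppositely-signed intersections of $K$ with $M$, producing a new component lying in $V-M$, and then apply the inductive hypothesis to the remaining knot.

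Since the geometric intersection count $|K\cap M|=|p|+2q$ strictly exceeds $|\mathrm{lk}(\partial M,K)|=|p|$, the cyclic sequence of signs of intersection points encountered while traversing $K$ cannot be constant. Hence some consecutive intersection points $x_i, x_{i+1}$ along $K$ have opposite signs; let $\beta$ be the sub-arc of $K$ joining them, whose interior is disjoint from $M$. I would choose a simple arc $\alpha\subset M$ from $x_i$ to $x_{i+1}$ avoiding the other points of $K\cap M$, and construct a band $B$ whose boundary consists of small sub-arcs of $K$ near $x_i, x_{i+1}$ together with two arcs parallel to $\alpha$ pushed into the side of $M$ containing $\beta$. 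The opposite-sign condition ensures $B$ is attached coherently to $K$, and this choice of side guarantees that the new component $C$ born from fission along $B$ lies in $V-M$. After this fission, $K$ is replaced by $K^*\cup C$ with $|K^*\cap M|=|p|+2(q-1)$ and $\mathrm{lk}(\partial M, K^*)=p$; applying the inductive hypothesis to $K^*$ (with parameter $q-1$) yields further components $C_2,\ldots,C_q\subset V-M$ such that $K'\cup C_2\cup\cdots\cup C_q$ is algebraically split and $K'$ has $|p|$ same-sign intersections with $M$. Setting $L'=C\cup C_2\cup\cdots\cup C_q$ gives the candidate link.

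The hard part will be verifying algebraic splitness of $K'\cup L'$. The pairwise linkings within $K'\cup C_2\cup\cdots\cup C_q$ vanish by induction, so the task reduces to showing $\mathrm{lk}(C, K')=0$ and $\mathrm{lk}(C, C_j)=0$ for $j\geq 2$. The cleanest approach is to arrange the band $B$ so that $C$ bounds an embedded disk $D\subset V$ disjoint from $K^*$; since the subsequent fissions act only on $K^*$, the disk $D$ automatically remains disjoint from $K'$ and every $C_j$, forcing the desired vanishings. To construct $D$ I would exploit two degrees of freedom in the band choice: the framing of $B$ can be adjusted by full twists to arrange $\mathrm{lk}(C, K^*)=0$, and once this algebraic condition holds, an innermost-disk surgery argument in the $3$-ball $V-M$, combined with a general-position choice of $\alpha$ placed close to $\beta$ (so that a thin slab neighborhood of $\alpha\cup\beta$ contains $C$ with a natural bounding disk), should convert the algebraic vanishing into the geometric disjointness of an embedded disk $D$ from $K^*$.
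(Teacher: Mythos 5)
The first half of your argument---finding consecutive oppositely-signed intersection points joined by a subarc $\beta$ of $K$ missing $M$, and doing a fission along a band over an arc $\alpha\subset M$ so that the new component falls into $V-M$ and the geometric intersection with $M$ drops by $2$---is exactly the first stage of the paper's proof (its arcs $\gamma_i$ and subarcs $\alpha_i$). The gap is in the second half, and you have correctly located where the difficulty is but proposed a step that fails.

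The claim that the newborn component $C$ can be arranged to bound an embedded disk $D\subset V$ disjoint from $K^*$ is false in general. Up to isotopy $C$ is $\beta\cup\alpha'$ for a pushoff $\alpha'$ of $\alpha$; the arc $\beta$ is an arbitrary subarc of $K$ and may clasp the remaining strands in a Whitehead-link pattern, in which case $\mathrm{lk}(C,K^*)=0$ but $C$ bounds no embedded disk in the complement of $K^*$ (indeed $\beta\cup\alpha'$ may itself be a knotted circle and bound no embedded disk in $V$ at all, so there is no surface on which to run an innermost-disk argument). Placing $\alpha$ ``close to $\beta$'' does not help, since $\beta$ is forced on you by $K$. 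There is also a secondary problem with adjusting $\mathrm{lk}(C,K^*)$ by full twists of the band: a full twist makes both long sides of the band cross $M$, so $C$ would acquire two new intersections with $M$ and leave $V-M$. Without the disk, your induction does not close: $\mathrm{lk}(C,K^*)=0$ does not distribute to give $\mathrm{lk}(C,K')=\mathrm{lk}(C,C_j)=0$ after the later fissions split $K^*$, since linking numbers are only additive in total.

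The paper avoids asking for any geometric splitting. It performs all $q$ fissions first, records the vector of absolute pairwise linking numbers $v(\gamma_1,\dots,\gamma_q)$ as a function of the chosen arcs $\gamma_i\subset M$, and takes a choice minimizing $v$ lexicographically. If some $l_{i,j}\neq 0$, it modifies $\gamma_i$ by band-summing with the boundary of a small disk $D_j\subset M$ around an intersection point of the $j$th piece; this decrements $l_{i,j}$ and changes only entries $l_{s,t}$ with $s\geq i$, $t\geq j$, contradicting minimality. This achieves exactly the algebraic condition needed (vanishing linking numbers) without ever producing, or needing, embedded disks. If you want to salvage your write-up, replace the disk argument by some such rerouting-of-$\gamma_i$-in-$M$ scheme that controls all pairwise linking numbers simultaneously.
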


\begin{proof}
First, we inductively transform $K$ into a link $K^q \cup L^q$, 
which is not necessarily algebraically split, such that $L^q$ is 
contained in $V-M$ and $|\mathrm{lk}(\partial M, K^q)|=|M\cap K^q|=|p|.$

[{\bf 1st Step}]
Choose two points $a_1$ and $b_1$ in $M\cap K$ so that \\
(1)~$\text{sign} (a_1)=1$, $\text{sign} (b_1)=-1$ and \\
(2)~there is a subarc $\alpha_1$ in $K$ with 
$M\cap \alpha_1=\partial \alpha_1=\{a_1,b_1\}$ such that the orientation 
from $a_1$ to $b_1$ along $\alpha_1$ is as same as that of $K$. \\
Let $\gamma_1$ be an arc in $M$ with 
$\gamma_1\cap K=\partial \gamma_1=\{a_1,b_1\}$, and let $N(\gamma_1)$ 
be a fission band of $K$ which is an $I$-bundle over $\gamma_1$ with 
$N(\gamma_1)\cap M=\gamma_1$. By fission along $N(\gamma_1)$, we have 
a new link $K^1\cup K^{(1)}$ from $K$, where $K^1\cap\alpha_1=\emptyset$.
Note that $M\cap (K^1\cup K^{(1)})=M\cap K^1$, see Figure~\ref{fission2}.

\begin{figure}[!h]
\includegraphics[trim=0mm 0mm 0mm 0mm, width=.6\linewidth]{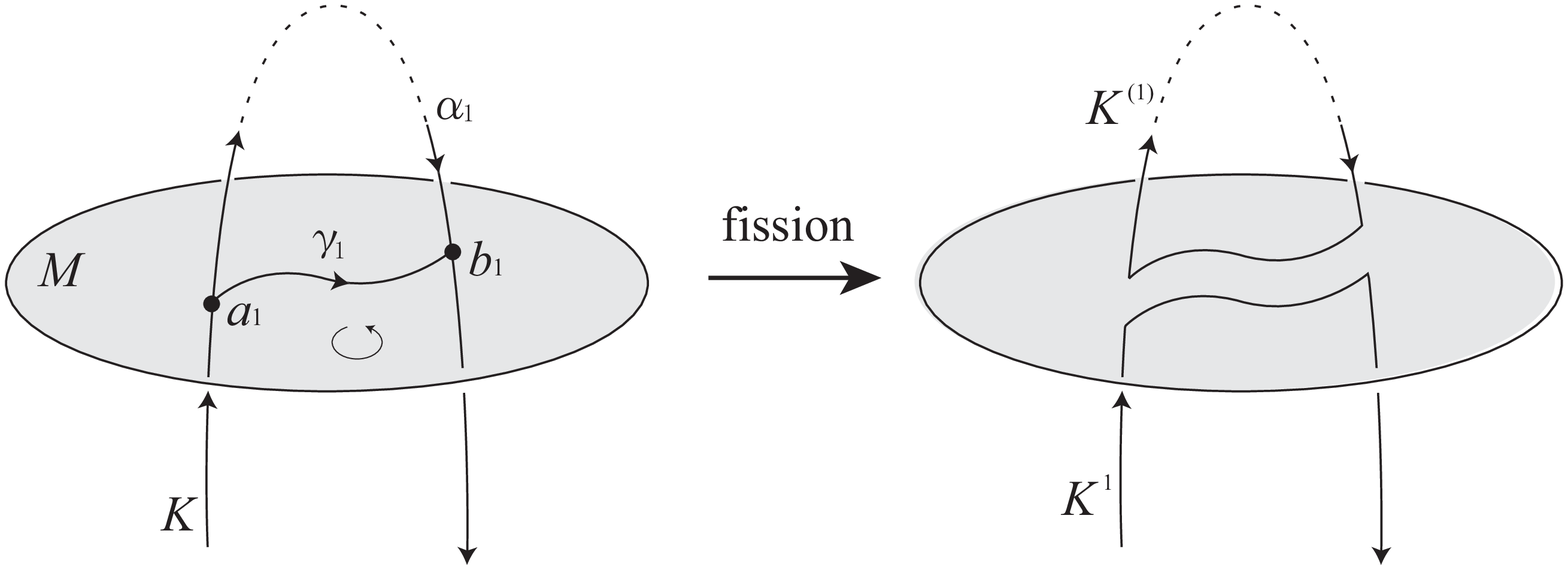}
\caption{}\label{fission2}
\end{figure}

[{\bf 2nd Step}]
Choose two points $a_2$ and $b_2$ in $M\cap K^1$ so that \\
(1)~$\text{sign} (a_2)=1$, $\text{sign} (b_2)=-1$ and \\
(2)~there is a subarc $\alpha_2$ in $K^1$ with 
$M\cap \alpha_2=\partial \alpha_2=\{a_2,b_2\}$ such that the orientation 
from $a_2$ to $b_2$ along $\alpha_2$ is as same as that of $K^1$. \\
Let $\gamma_2$ be an arc in $M$ with 
$\gamma_2\cap K^1=\partial \gamma_2=\{a_2,b_2\}$, and let $N(\gamma_2)$ 
be a fission band of $K^1$ which is an $I$-bundle over $\gamma_2$ with 
$N(\gamma_2)\cap M=\gamma_2$. By fission along $N(\gamma_2)$, we have 
a new link $K^2\cup K^{(1)}\cup K^{(2)}$ from $K^1\cup K^{(1)}$, where 
$K^2\cap\alpha_2=\emptyset$.

Running this process until the $q$-th step, we have 
$K^q \cup L^q=K^q \cup (K^{(1)} \cup\cdots\cup K^{(q)})$ with 
$M\cdot(K^q\cup L^q)=M\cdot K^q=\mathrm{lk}(\partial M,K^q) =\mathrm{lk}
(\partial M,K)$. From the construction, $L^q$ is a $q$-component link in
$V-M$. Now we show that we can choose $\gamma_1,...,\gamma_q$ so that 
$K^q\cup L^q$ is an algebraically split link.

Set $K^q=K^{(q+1)}$ and 
$l_{i,j}=|\mathrm{lk}(K^{(i)},K^{(j)})|~(1\leq i<j\leq q+1)$. Then 
we have a vector \[(l_{1,2},l_{1,3},...,l_{1,q+1},
l_{2,3},l_{2,4},...,l_{2,q+1},...,l_{q-1,q},l_{q-1,q+1},l_{q,q+1}).\]
This vector depends on the choice of $\gamma_1,...,\gamma_q$.
We denote the vector by $v(\gamma_1,...,\gamma_q)$. We choose arcs 
$\gamma_1,...,\gamma_q$ so that $v(\gamma_1,...,\gamma_q)$ is the 
minimum under the lexicographic order. If $v(\gamma_1,...,\gamma_q)$ 
is a non-zero vector, then we have that $l_{i,j}\neq 0$ for some 
$1\leq i<j\leq q+1$.

Case 1: When $i\neq q$ and $\mathrm{lk}(K^{(i)},K^{(j)})>0$ (resp. $<0$),
we choose a disk $D_j$ which is a regular neighborhood of $a_j$ in $M$ 
with $\mathrm{lk}(\partial D_j,K)=1$ (resp. $=-1$). Let $B$ be a band 
attached to both $\partial D_j$ and $\gamma_i$ with coherent orientation, 
see Figure~\ref{band}.

\begin{figure}[!h]
\includegraphics[trim=0mm 0mm 0mm 0mm, width=.7\linewidth]{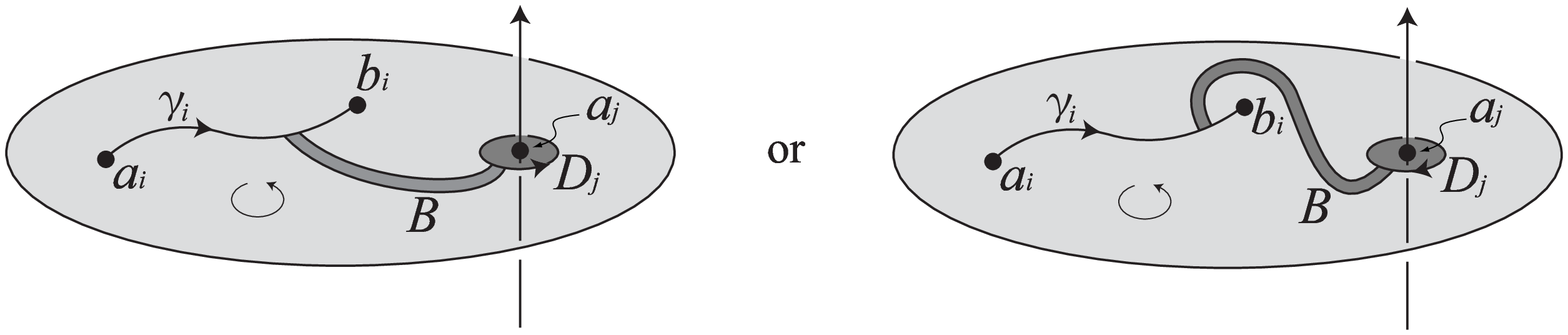}
\caption{}\label{band}
\end{figure}

We may assume that $(D_j\cup B)\cap K=D_j\cap K=a_j$. Let 
$\gamma_i'=\gamma_i\cup\partial (B\cup D_j)-\mathrm{int}(\gamma_i\cap B)$
be an arc obtained from $\gamma_i\cup\partial D_j$ by fission along $B$.
For $\gamma_1,...,\gamma_{i-1},\gamma'_i,\gamma_{i+1},...,\gamma_q$, 
we have a new vector 
$v(\gamma_1,...,\gamma_{i-1},\gamma'_i,\gamma_{i+1},...,\gamma_q)=
(l'_{1,2},...,l'_{q,q+1})$. By the construction of $\gamma'_i$, we note 
that $l'_{i,j}=l_{i,j}-1$ and that if $l'_{s,t}\neq l_{s,t}$, then 
$s\geq i$ and $t\geq j$. This contradicts the minimality of the choice of 
$\gamma_1,...,\gamma_n$.

Case 2: When $i=q$.
Let $a_{n+1}$ be a point in $K^{q}\cap M$. Then by arguments similar to 
that in Case 1, we also have a contradiction.
\end{proof}

\begin{proof}[Proof of Proposition \ref{prp:fission}]
Let $K' \cup L'$ be a link as in Lemma \ref{lem:n-fission}. Push the
$3$-ball $V-\mathrm{int} N(M)$ into the interior of $V$ and let the
result be $B^3$. Then $K'\cap (\overline{V-B^3})$ consists $|p|$ arcs
$\{c_1,...,c_{|p|}\} \times [0,1]$, where $\{c_1,...,c_{|p|}\}=K'\cap M$. 
Then we can take $|p|$-bands in $V-B^3$ so that fission along the 
$|p|$-bands transforms $K' \cup L'$ into the union of the $p$ zero-framed 
parallels $L_1$ of the core of $V$ and the link $L_2$ with $(q+1)$-components 
in $B^3$. Since $L_2$ is an algebraically split link, $L_1 \cup L_2$ 
is the required link in the proposition.
\end{proof}

\begin{proof}[Proof of Theorem \ref{delta-concordance}]
Let $K$ and $K'$ be knots in a solid torus $V \subset S^3$, 
which is the complement $V$ of the trivial knot $O$, with 
$\mathrm{lk}(\partial M,K)=\mathrm{lk}(\partial M,K')=1$, 
where $M$ is a meridian disk of $V$ with $\partial M=O$.

Suppose that $K$ intersects $M$ transversely and $|M\cap K|=1+2q$.
From Proposition \ref{prp:fission}, there are $(1+q)$ fissions in
$V$ which transform $K$ into $L_1 \cup L_2$ such that $L_1$ is the
core of $V$ and $L_2$ is an algebraically split link with $q$
components in a $3$-ball $B^3$ in $V-L_1$. Since an algebraically
split link is $\Delta$-equivalent to a trivial link \cite{MN}, $L_2$
is $\Delta$-equivalent to a trivial link in $B^3$.
This implies that $K$ can be transformed into a link $L_1 \cup L_2$
by a finite number of fissions, and $L_1 \cup L_2$ into a split sum
of $L_1$ and a trivial link by self $\Delta$-moves. (Recall that a
self $\Delta$-move means a $\Delta$-move whose three strands belong
to a link obtained from a single component by fissions.) By
Lemma~\ref{reorder}, there is a knot $K''$ such that $K$ is self
$\Delta$-equivalent to $K''$ and $K''$ is concordant to $L_1$.

By a similar argument, $K'$ is $\Delta$ concordant to $L_1$ and
hence $\Delta$ concordant to $K$.
\end{proof}

\section{Examples}

\begin{example}\label{example1}
Let $L=K_1\cup K_2\cup K_3$ be the closure of the 3-string link as
illustrated in Figure~\ref{example-fig1}, which is represented as a
trivial string link with claspers. Roughly speaking, each clasper can 
be replaced with a tangle as illustrated in Figure~\ref{milnor-tangle}. 
For a precise definition, see \cite{H}. Note that $L$ is a Brunnian
link. By using the calculation method described in \cite[Remark~5.3]{Yasu2}, 
we have $\overline{\mu}_L(I)=0$ for any $I$ with $|I|\leq 3$, and
$|\overline{\mu}_L(3213)|=|\overline{\mu}_L(1231)|=1$. In particular, 
$\overline{\mu}_L(I)=0$ for any $I$ with $r(I)=1$, hence $L$ is 
link-homotopic to a trivial link. Since $\overline{\mu}$ has 
\lq cyclic symmetry' \cite[Theorem~8]{Milnor2}, 
$|\overline{\mu}_L(3321)|=|\overline{\mu}_L(1332)|=|\overline{\mu}_L(1123)|=1$.
It follows from Corollary~\ref{brunnian-free-homotopy}  that any
component $K_i$ is   not null-homotopic in $S^3\setminus(L-K_i)~(i=1,2,3)$.
\end{example}

\begin{figure}[!h]
\includegraphics[trim=0mm 0mm 0mm 0mm, width=.23\linewidth]{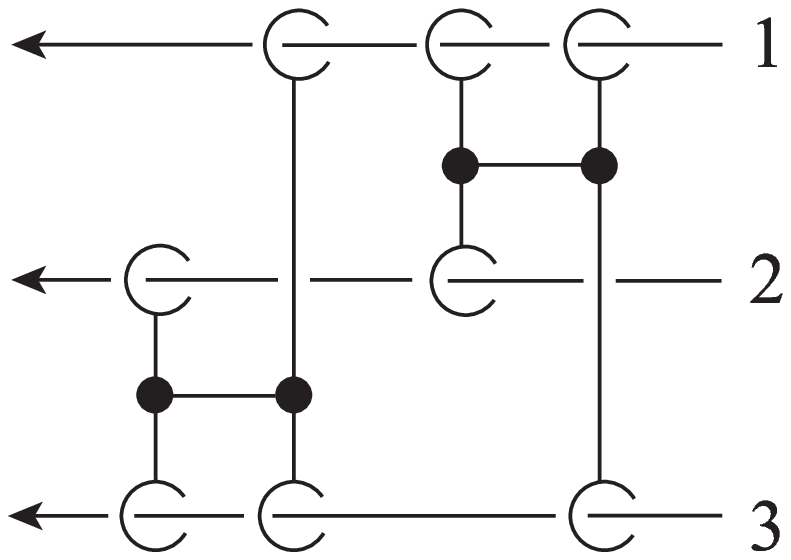}
\caption{}\label{example-fig1}
\end{figure}

\begin{figure}[!h]
\includegraphics[trim=0mm 0mm 0mm 0mm, width=.7\linewidth]{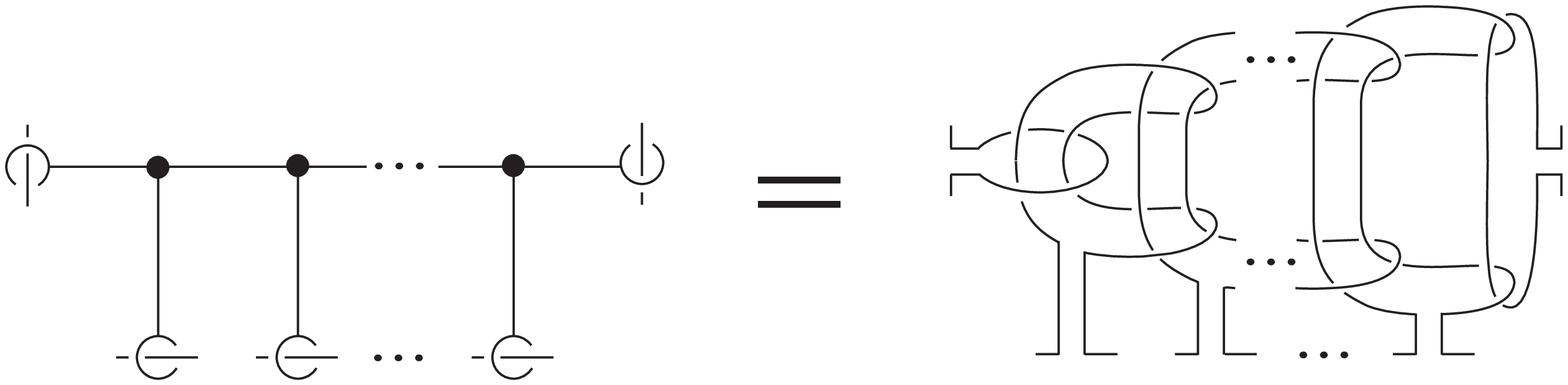}
\caption{}\label{milnor-tangle}
\end{figure}

\begin{example}\label{example2}
Let $L=K_1\cup K_2$ be the closure of the $2$-string link illustrated 
in Figure~\ref{example-fig2}. Note that $L$ is a Brunnian link. Then, 
by using the calculation method described in \cite[Remark~5.3]{Yasu2}, 
we have $\overline{\mu}_L(I)=0$ for any $I$ with $|I|\leq 5$, and
$|\overline{\mu}_L(222211)|=|\overline{\mu}_L(111122)|= 2$. It follows 
from \cite[Corollary~1.5]{Yasu2} and Proposition~\ref{free-Ck-inv}
that $L$ is self $\Delta$-equivalent to a trivial link and 
any component $K_i$ is  not $\Delta$-equivalent to a trivial knot in 
$S^3\setminus(L-K_i)~(i=1,2)$. 
In contrast, we notice by Remark~\ref{remark2} that each component 
$K_i$ is null-homotopic in $S^3\setminus(L-K_i)$.

\begin{figure}[!h]
\includegraphics[trim=0mm 0mm 0mm 0mm, width=.4\linewidth]{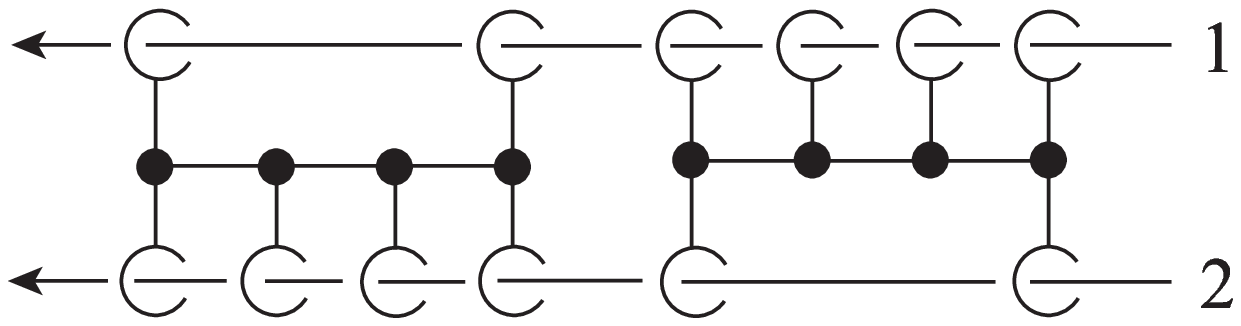}
\caption{}\label{example-fig2}
\end{figure}

\end{example}

For any $k\geq 2$, there are knots that are $C_k$-equivalent to the
trivial knot and not $C_{k+1}$-equivalent to the trivial knot
\cite{O}. Let $L$ be a link which is a split sum of such knots. Then
each component $K$ of $L$ is $C_k$-equivalent to the trivial link in
$S^3\setminus (L-K)$ and is not $C_{k+1}$-equivalent to the trivial
link in $S^3\setminus (L-K)$. It seems to be uninteresting. Hence we
show that for each $k\geq 2$, there is a Brunnian 2-component link
$L$ such that each component $K$ of $L$ is $C_{k-1}$-equivalent to the
trivial knot and is not $C_{k}$-equivalent to the trivial knot in
$S^3\setminus (L-K)$.

\begin{example}\label{example3}
Let $L_k~(k\geq 2)$ be the 2-component link as illustrated in
Figure~\ref{example-fig3}. Then each component of $L_k$ is not
$C_{k}$-equivalent to the trivial knot in the complement of the
other component, but is $C_{k-1}$-equivalent to the trivial knot in
the complement of the other component.
\end{example}

\begin{figure}[!h]
\includegraphics[trim=0mm 0mm 0mm 0mm, width=.45\linewidth]{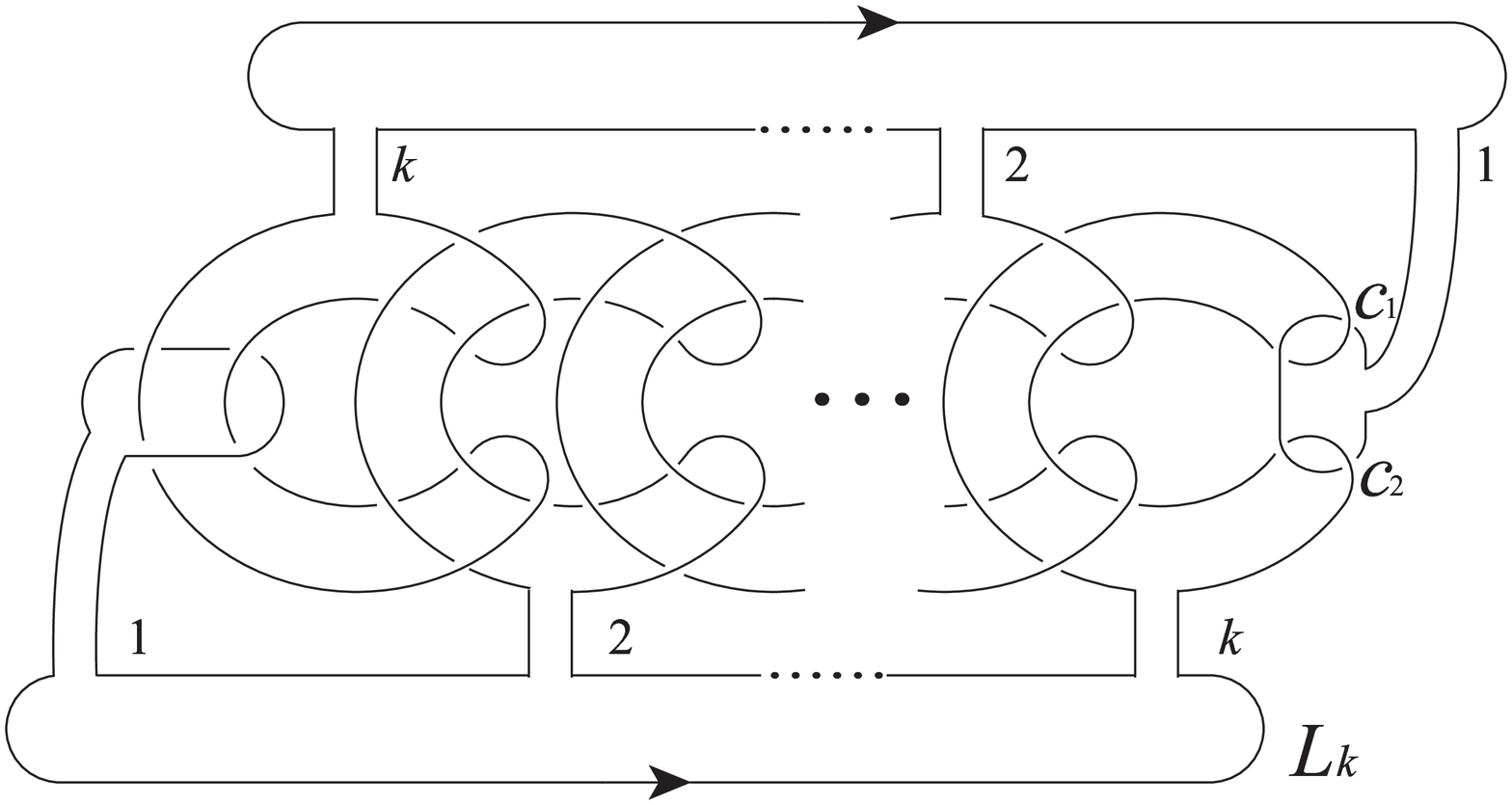}
\caption{}\label{example-fig3}
\end{figure}

\begin{rem}
In the proof of Example~\ref{example3}, we show that
$\overline{\mu}_{L_k}([p,q])=0$ for any $p,q~(p+q\leq 2k,~p\neq q)$ 
and $\overline{\mu}_{L_k}([k,k])=-2$, where $\overline{\mu}([p,q])$ 
denotes $\overline{\mu}(11...122...2)$ with $1$ appearing $p$ times 
and $2$ appearing $q$ times.
\end{rem}

\begin{proof}
First we compute the Conway polynomial $\nabla_{L_k}(z)$ mod $z^{2k}$.
By changing/splicing the two crossings $c_1$ and $c_2$ in 
Figure~\ref{example-fig3}, we have
\[\nabla_{L_k}=\nabla_H(z)-z\nabla_{K_k}-z^2\nabla_{L'_k},\] where
$H$ is the Hopf link with $\nabla_H(z)=z$, $K_k$ is the knot as 
illustrated in Figure~\ref{example-fig3-1} and $L'_k$ is the link 
as illustrated in Figure~\ref{example-fig3-2}.

\begin{figure}[!h]
\includegraphics[trim=0mm 0mm 0mm 0mm, width=.45\linewidth]{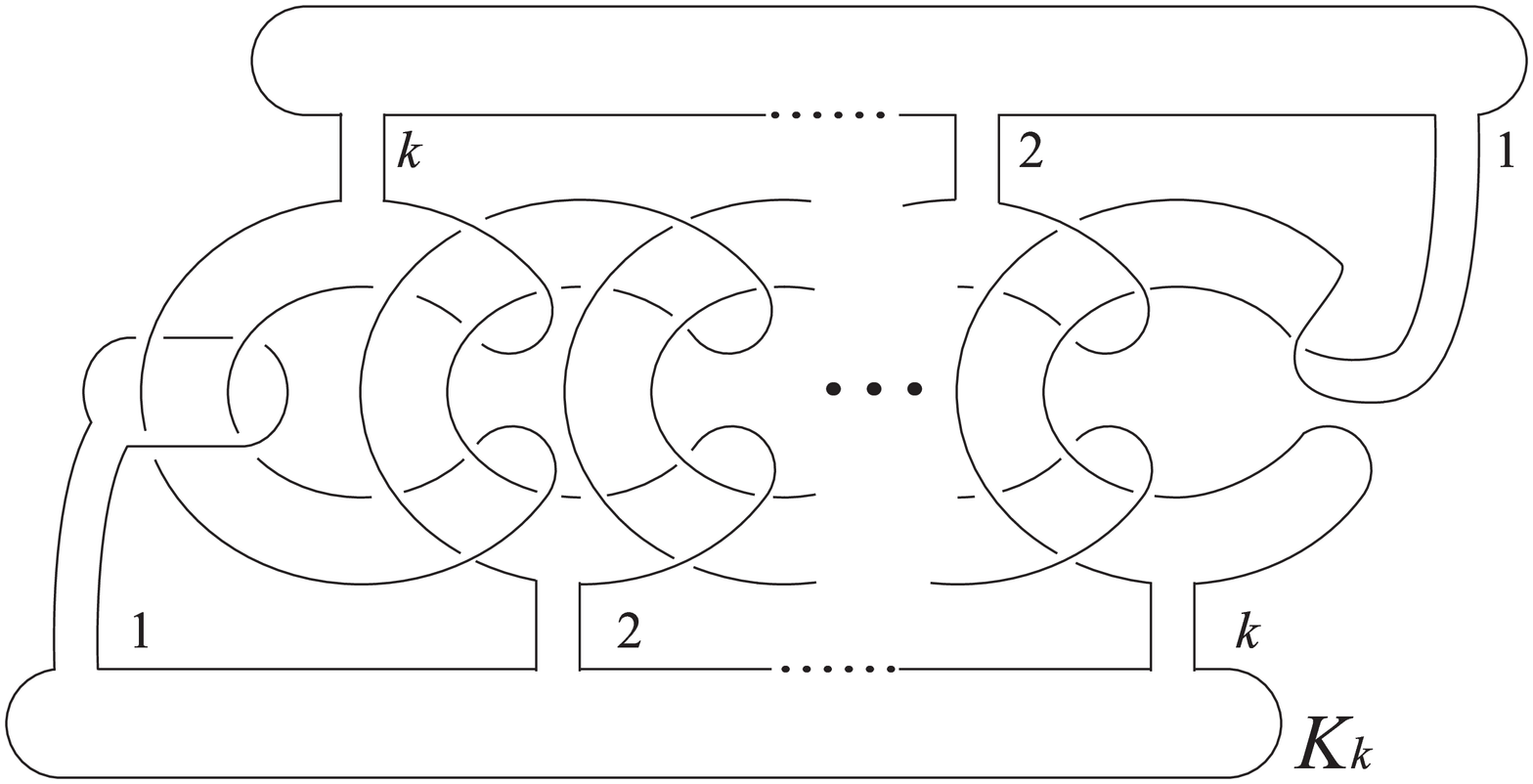}
\caption{}\label{example-fig3-1}
\end{figure}

\begin{figure}[!h]
\includegraphics[trim=0mm 0mm 0mm 0mm, width=.45\linewidth]{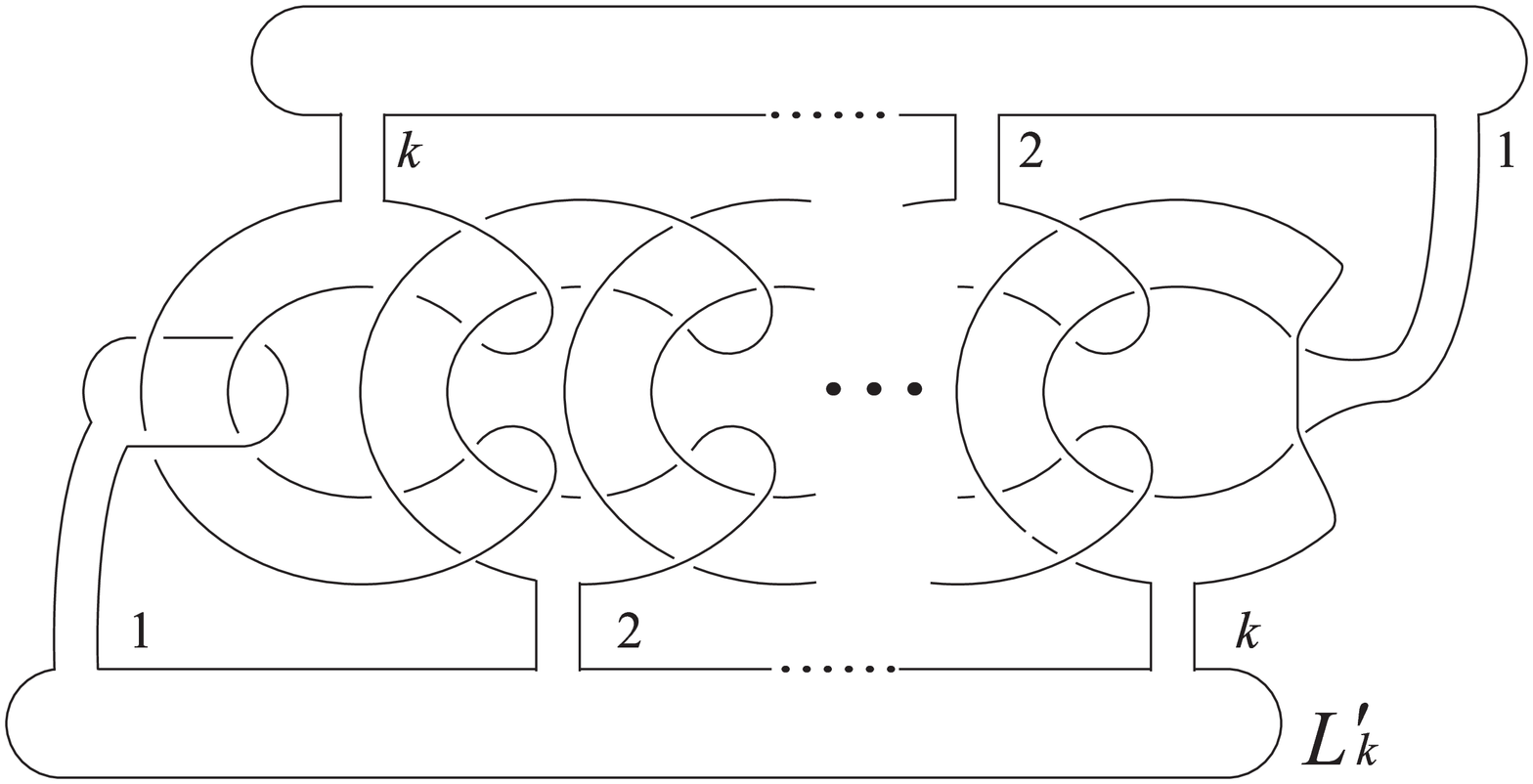}
\caption{}\label{example-fig3-2}
\end{figure}

Note that $L'_k$ is $C_{2k-2}$-equivalent to a trivial link. Since
the finite type invariants of order $\leq m-1$ are invariants for
$C_m$-equivalence \cite{H}, and since the $z^{m-1}$-coefficient
$a_{m-1}$ of the Conway polynomial is a finite type invariant of
order $\leq m-1$ \cite{BNv}, we have $\nabla_{L'_k}(z)\equiv 0$ mod
$z^{2k-2}$. Hence we have $\nabla_{L_k}(z)\equiv z-z\nabla_{K_k}$
mod $z^{2k}$. Moreover, since $L_k$ is $C_{2k-1}$-equivalent to a
trivial link, $\nabla_{L_k}(z)\equiv 0$ mod $z^{2k-1}$. This implies
that $\nabla_{L_k}(z)\equiv -a_{2k-2}(K_k)z^{2k-1}$ mod $z^{2k}$.
Therefore, it is enough to compute $\nabla_{K_k}(z)$.

We compute the Alexander-Conway polynomial in order to have
$\nabla_{K_k}(z)$. For a Seifert surface $F$ of $K_k$ and a basis
$x_1,...,x_{2k-2},y_1,...,y_{2k-3},z$ of $H_1(F;{\Bbb Z})$ as
illustrated in Figure~\ref{example-fig4-1}, we have the following
Seifert matrix with respect to the basis 
\[M(K_k)=
	\left(
	\begin{array}{c||c|c}
		O_{(2k-2)\times(2k-2)} & 
		A_{(2k-2)\times(2k-3)} &
	\begin{array}{c}
		1\\	0\\	\vdots\\	0\\	-1
	\end{array}\\ 
	\hline\hline
		B_{(2k-3)\times(2k-2)} &
	\begin{array}{cccc}
		1&0&\cdots&0\\		0&0&\cdots&0\\
		\vdots&\vdots&&\vdots\\	0&0&\cdots&0
	\end{array} &
	\begin{array}{c}
		0\\	\vdots\\	\vdots\\	0\\
	\end{array}\\ 
	\hline
	\begin{array}{cccc}	0&\cdots&0&-1	\end{array} &
	\begin{array}{cccc}	0&0&\cdots&0	\end{array} &
		0
	\end{array}
	\right),\]
where $O_{(2k-2)\times(2k-2)}$ is the  $(2k-2)\times (2k-2)$ zero matrix,
$A_{(2k-2)\times(2k-3)}=(a_{ij})$ is a $(2k-2)\times (2k-3)$ matrix with
\[a_{ij}=
	\mathrm{lk}(x_i^+,y_j)=
	\left\{
	\begin{array}{ll}
	 	 1 & \text{if $i=j$,} \\
		-1 & \text{if $i\geq 3$ is odd and $j=i-1$,}\\
		 0 & \text{otherwise,}
	\end{array}
	\right. \]
and 
$B_{(2k-3)\times(2k-2)}=(b_{ij})$ is a $(2k-3)\times (2k-2)$ matrix with 
\[b_{ij}=
	\mathrm{lk}(y_i^+,x_j)=
	\left\{
	\begin{array}{ll}
		 1 & \text{if $i=j$,}\\
		-1 & \text{if $i$ is odd and $j=i+1$,}\\
		 0 & \text{otherwise.}
	\end{array}
	\right. \]

\begin{figure}[!h]
\includegraphics[trim=0mm 0mm 0mm 0mm, width=.8\linewidth]{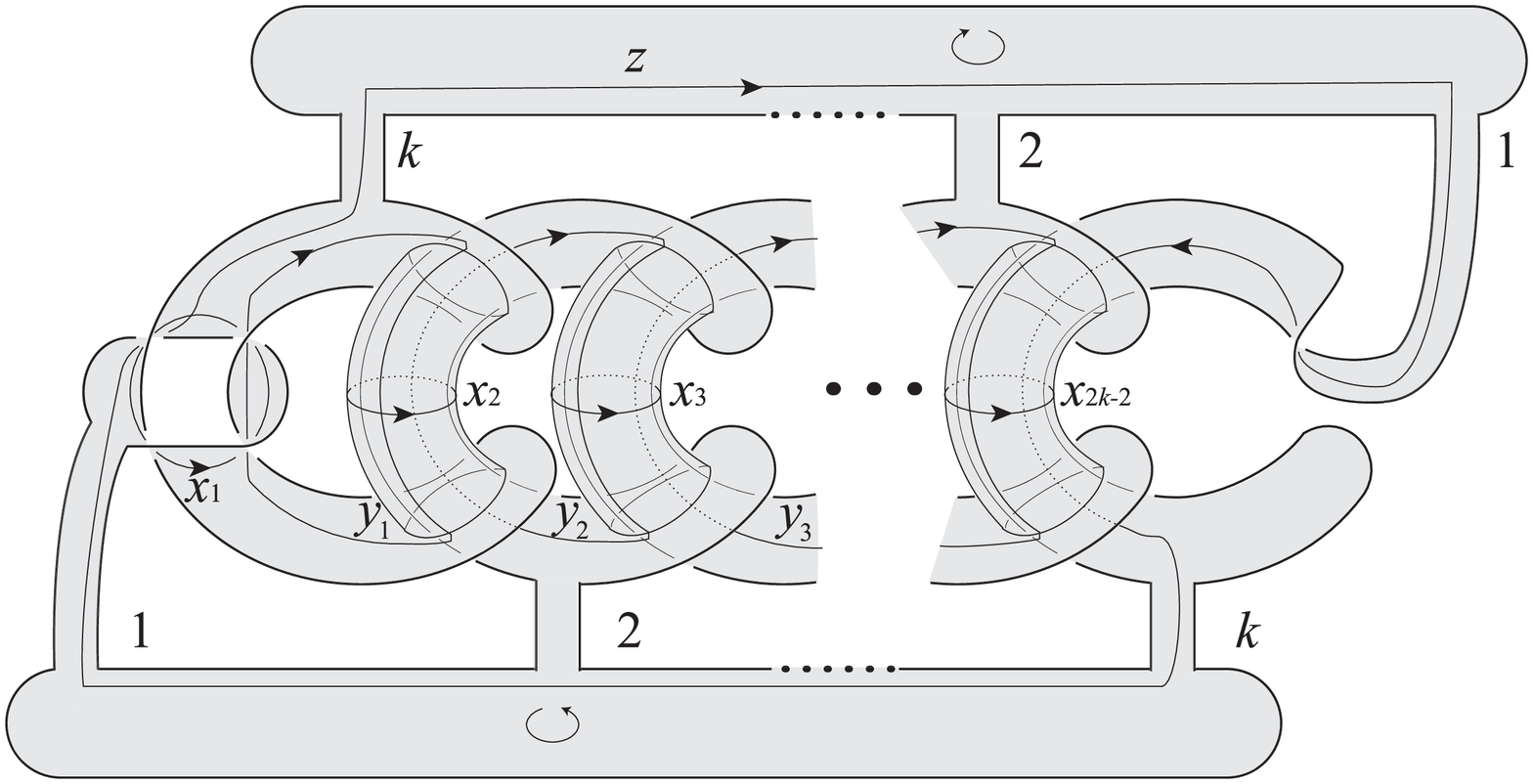}
\caption{}\label{example-fig4-1}
\end{figure}

For example, when $k=4$, then
\[A_{6\times 5}=
	\left(
	\begin{array} {ccccc}
		1     & 0    & 0 & 0 &0 \\
		0     & 1    & 0 & 0 &0 \\
		0     & -1   & 1 & 0 &0 \\
		0     & 0    & 0 & 1 &0 \\
		0     & 0    & 0 & -1&1 \\
		0     & 0    & 0 & 0 &0
	\end{array}
	\right),~\text{and}~
B_{5\times 6}=
	\left(
	\begin{array} {cccccc}
		1  & -1  & 0 & 0 & 0 & 0  \\
		0  &  1  & 0 & 0 & 0 & 0  \\
		0  & 0   & 1 & -1& 0 & 0  \\
		0  & 0   & 0 & 1 & 0 & 0  \\
		0  & 0  & 0 & 0 & 1 & -1
	\end{array}
	\right).\]
Then, 
the Conway polynomial $\nabla_{K_4}(\sqrt{t}^{-1}-\sqrt{t})=
|\sqrt{t}^{-1}M(K_4)-\sqrt{t}(M(K_4))^T|$ is the product of
	\[\left|
	\begin{array} {ccccc|c}
	\sqrt{t}^{-1}-\sqrt{t}     & 0    & 0 & 0 &0 &\sqrt{t}^{-1}\\
	\sqrt{t}    & \sqrt{t}^{-1}-\sqrt{t}    & 0 & 0 &0 &0\\
	0     & -\sqrt{t}^{-1}   & \sqrt{t}^{-1}-\sqrt{t} & 0 &0&0 \\
	0     & 0    & \sqrt{t} & \sqrt{t}^{-1}-\sqrt{t} &0&0 \\
	0     & 0    & 0 & -\sqrt{t}^{-1}&\sqrt{t}^{-1}-\sqrt{t} &0\\
	0     & 0    & 0 & 0 &\sqrt{t} &\sqrt{t}-\sqrt{t}^{-1}
	\end{array}
	\right|\]
and
	\[\left|
	\begin{array} {cccccc}
	\sqrt{t}^{-1}-\sqrt{t}  & -\sqrt{t}^{-1}  & 0 & 0 & 0 & 0  \\
	0  &  \sqrt{t}^{-1}-\sqrt{t}  & \sqrt{t} & 0 & 0 & 0  \\
	0  & 0   & \sqrt{t}^{-1}-\sqrt{t} & -\sqrt{t}^{-1}& 0 & 0  \\
	0  & 0   & 0 & \sqrt{t}^{-1}-\sqrt{t} & \sqrt{t} & 0  \\
	0  & 0  & 0 & 0 & \sqrt{t}^{-1}-\sqrt{t} & -\sqrt{t}^{-1}\\ \hline
	-\sqrt{t} & 0 & 0 & 0 & 0 & \sqrt{t}-\sqrt{t}^{-1}
	\end{array}
	\right|.
	\]
Hence we have
	\[\begin{array}{rcl}
	\nabla_{K_4}(\sqrt{t}^{-1}-\sqrt{t})
	&=&	
	((-1)^{3}-(\sqrt{t}^{-1}-\sqrt{t})^{6})
	((-1)^{5}-(\sqrt{t}^{-1}-\sqrt{t})^{6})\\
	&=&
	1+ 2(\sqrt{t}^{-1}-\sqrt{t})^{6}+(\sqrt{t}^{-1}-\sqrt{t})^{12}.
	\end{array}\]
In general,
	\[\begin{array}{rcl}
	\nabla_{K_k}(\sqrt{t}^{-1}-\sqrt{t})
	&=&
	((-1)^{k-1}-(\sqrt{t}^{-1}-\sqrt{t})^{2k-2})
	((-1)^{k+1}-(\sqrt{t}^{-1}-\sqrt{t})^{2k-2})\\
	&=&
	1+(-1)^k 2(\sqrt{t}^{-1}-\sqrt{t})^{2k-2}+
	(\sqrt{t}^{-1}-\sqrt{t})^{4k-4}.
	\end{array}\]
This implies
\[\nabla_{L_k}(z)\equiv - (-1)^k 2z^{2k-1}~\text{mod}~ z^{2k}.\]

On the other hand, we note that $L_k$ is obtained from the trivial
knot by surgery along $C_{2k-1}$-tree $T$ such that the number of
leaves that intersect the $i$th component is equal to $k$ for each
$i~(i=1,2)$ (see Figure~\ref{milnor-tangle}). It follows from the
proof of \cite[Lemma~1.2]{FY}  that each component of $L_k$ is
$C_{k-1}$-equivalent to the trivial knot in the complement of the
other component. Hence by Proposition~\ref{free-Ck-inv},
$\overline{\mu}_{L_k}(I)=0$ for any multi-index $I$ with entries
from $\{1,2\}$ such that either the index $1$ or $2$ appears in $I$
at most $k-1$ times. By \cite[Theorem~4.1]{Murasugi} (or
\cite[Theorem~4.1]{Cochran}),  we have 
\[(-1)^{k-1}\overline{\mu}_{L_k}([k,k])=
\sum_{p+q=2k}(-1)^{q-1}\overline{\mu}_{L_k}([p,q])=
-a_{2k-1}(L_k)=(-1)^k 2,\] and hence $\overline{\mu}_{L_k}([k,k])=-2$.
Proposition~\ref{free-Ck-inv} implies that each component of $L_k$
is not $C_{k}$-equivalent to the trivial knot in the complement of
the other component.
\end{proof}

We finish this section by presenting infinitely many pairs 
$L_{p}^{+} \cup L_{p}^{-}$ of componentwise satellite links 
of type $(\Gamma;1,p) (|p|\geq 2)$ such that $L_p^{+}$ is not
self-$\Delta$ concordant to $L_p^{-}$.

\begin{example} \label{example4}
Let $L_{p}^+$ (resp. $L_{p}^-$) be the link with linking number 
$p$ as illustrated in the left of Figure \ref{fig:ctr-expl2a} 
with $T_p^+$ (resp. $T_p^-$) representing the braid 
$\sigma_1\sigma_2\cdots\sigma_{|p|-1}$ (resp.
$\sigma_1^{-1}\sigma_2\cdots\sigma_{|p|-1}$) if $p>0$ and 
$\sigma_{|p|-1}\cdots\sigma_2\sigma_1$ (resp. 
$\sigma_{|p|-1}\cdots\sigma_2\sigma_1^{-1}$) if $p<0$. 
Note that both $L_p^+$ and $L_p^-$ are componentwise satellite 
links of type $(H;1,p)$ for the Hopf link $H$. $L_{p}^+$ and 
$L_{p}^-$ are not self-$\Delta$ concordant.
\end{example}

\begin{figure}[!h]
\includegraphics[trim=0mm 0mm 0mm 0mm, width=.8\linewidth]{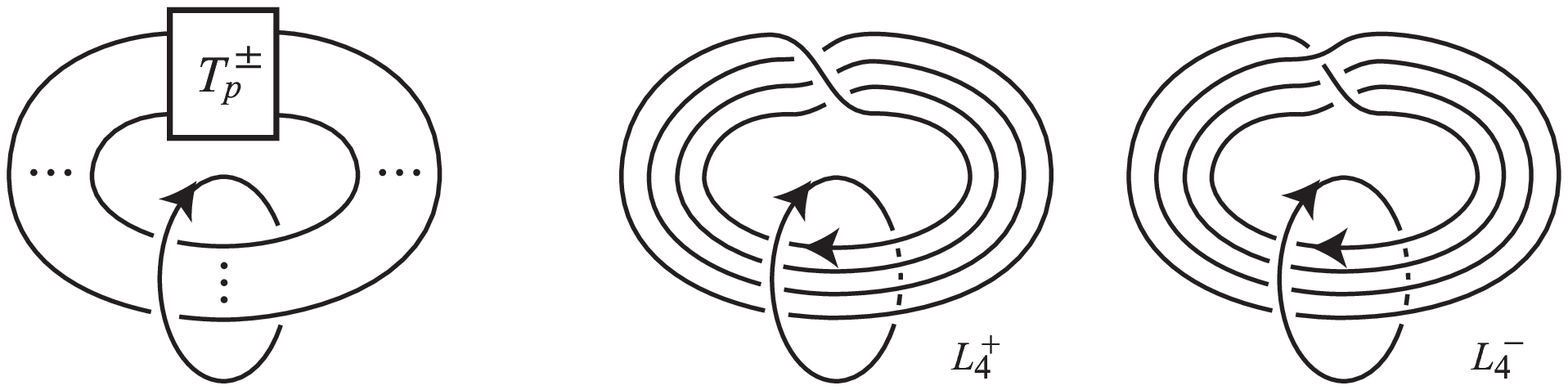}
\caption{}\label{fig:ctr-expl2a}
\end{figure}

\begin{proof}
Set $\varepsilon=p/|p|$. Let $L_{p}^0$ be the link obtained from 
$L_{p}^+$ by smoothing the crossing which corresponds to $\sigma_1$. 
Then by the definition of the Conway polynomial, we have
\[a_4(L_p^+)-a_4(L_p^-)=a_3(L_p^0),\] where $a_k$ is the coefficient 
of $z^k$ in the Conway polynomial. By \cite{JH-PAMS85}, we have
$a_3(L_p^0)=p-\varepsilon$. For a $2$-component link $L=K_1\cup K_2$,
it is known that $a_4(L)\equiv \overline{\mu}_L(1122)$ mod
$\overline{\mu}_L(12)$ \cite{Murasugi}, \cite{Cochran}, and
$\overline{\mu}_L(12)=\mathrm{lk}(K_1,K_2)=p$. Hence we have
\[\overline{\mu}_{L_p^+}(1122)-\overline{\mu}_{L_p^-}(1122)\equiv
a_4(L_p^+)-a_4(L_p^-)= a_3(L_p^0)=p-\varepsilon\equiv
-\varepsilon~~\text{mod}~p.\] Since $\overline{\mu}(1122)$ is a
self-$\Delta$ concordance invariant \cite{FY}, we have the conclusion.
\end{proof}

\begin{acknowledgments}
The authors would like to thank Professor Jonathan Hillman 
for pointing out Remark~\ref{remark3}.
\end{acknowledgments}

{\footnotesize
\bigskip
\noindent	Thomas FLEMING\\
		Department of Mathematics, University of California San Diego,
		9500 Gilman Dr., La Jolla, \\
		CA 92093-0112, United States\\
		e-mail: tfleming@math.ucsd.edu \par

\bigskip
\noindent	Tetsuo SHIBUYA \\
        	Department of Mathematics, Osaka Institute of Technology,
        	Asahi, Osaka 535-8585, Japan \\
        	e-mail: shibuya@ge.oit.ac.jp \par
\bigskip
\noindent	Tatsuya TSUKAMOTO \\
		Department of Mathematics, Osaka Institute of Technology,
		Asahi, Osaka 535-8585, Japan \\
        	e-mail: tsukamoto@ge.oit.ac.jp \par
\bigskip
\noindent	Akira YASUHARA \\
        	Department of Mathematics, Tokyo Gakugei University,
        	Koganei, Tokyo 184-8501, Japan \\
        	e-mail: yasuhara@u-gakugei.ac.jp
}


\begin{thebibliography}{99}

\bibitem{BNv}	 D. Bar-Natan, {\it On the Vassiliev knot invariants}, 
		 Topology {\bf34} (1995), 423--472.

\bibitem{Casson} A.J. Casson, {\it Link cobordism and Milnor's invariant},
		 Bull. London Math. Soc. {\bf 7} (1975), 39--40.

%\bibitem{CF}	 L. Cervantes and R.A. Fenn, Boundary links are homotopy trivial,
%		 Quart. J. Math. Oxford {\bf 39}(1988), 151-158.

\bibitem{Cochran} T.D. Cochran,
		 {\it Concordance invariance of coefficients of Conway's 
		  link polynomial},
		  Invent. Math. {\bf 82} (1985), 527--541.

\bibitem{C} 	  T.D. Cochran,
		  {\it Derivatives of links: Milnor's concordance 
		  invariants and Massey's products}, 
		  Mem. Amer. Math. Soc.  {\bf 84} (1990),  no. 427


\bibitem{FY}	  T. Fleming and A. Yasuhara, 
		  {\it Milnor's invariants and self $C_k$-equivalence}, 
		  Proc. Amer. Math. Soc. {\bf 137} (2009) 761--770.

\bibitem{Gif}	  C.H. Giffen,
		  {\it Link concordance implies link homotopy}, 
		  Math. Scand.  {\bf 45} (1979), 243--254.

\bibitem{Gol}	  D.L. Goldsmith,
		 {\it Concordance implies homotopy for classical links in $M\sp{3}$},
		  Comment. Math. Helv. {\bf 54} (1979), 347--355.

\bibitem{G}	  M.N. Goussarov, 
		 {\it Variations of knotted graphs. 
		 The geometric technique of $n$-equivalence}.
		 (Russian), Algebra i Analiz {\bf 12} (2000), no. 4, 79--125;
	 translation in St. Petersburg Math. J. {\bf 12} (2001), no. 4, 569--604.

\bibitem{HL}	 N. Habegger and X.S. Lin, 
		{\it The classification of links up to link-homotopy},
	 	 J. Amer. Math. Soc. {\bf 3} (1990), 389--419.

\bibitem{H}	 K. Habiro, 
		{\it Claspers and finite type invariants of links}, 
		 Geom. Topol. {\bf 4} (2000), 1--83.

\bibitem{Hill}	 J. Hillman, {\it Algebraic Invariants of Links},
		 Ser. Knots and Everything 32, World Sci. Publ., 2002

\bibitem{JH-PAMS85} J. Hoste,
		   {\em The first coefficient of the Conway polynomial},
		    Proc. Amer. Math. Soc., {\bf 95} (1985), 299--302.

\bibitem{KSS}	A. Kawauchi, T. Shibuya and S. Suzuki,
		{\it Descriptions on surfaces in four space, I. Normal forms},
		Math. Sem. Notes, Kobe Univ. {\bf 10} (1982), 75--125.

\bibitem{MKS}	W. Magnus, A. Karras, and D. Solitar,
		{\it Combinatorial group theory}, 
		(Wiley, New York-London-Sydney.), 1966.

\bibitem{Mat}	S.V. Matveev,
		{\em Generalized surgeries of three-dimensional manifolds 
		and representations of homology sphere} (in Russian),
		Math. Zametki, {\bf 42} (1987), 651--656.

\bibitem{MY}	J.B. Meilhan, A. Yasuhara,
		{\em Whitehead double and Milnor invariants}, preprint.

\bibitem{Milnor} J. Milnor, {\it Link groups}, 
		 Ann. of Math. (2) {\bf 59} (1954), 177--195.

\bibitem{Milnor2} J. Milnor, {\it Isotopy of links}, 
		  Algebraic geometry and topology,
		  A symposium in honor of S. Lefschetz, pp. 280--306,
		  Princeton University Press, Princeton, N. J., 1957.

\bibitem{MN}	H. Murakami and Y. Nakanishi,
		{\it On a certain move generating link-homology}, 
		Math. Ann. $\mathbf{283}$ (1989), 75--89.

\bibitem{Murasugi} K. Murasugi, {\it On Milnor's invariant for link},
		   Trans. Amer. Math. Soc. {\bf 124} (1966), 94--110.

\bibitem{N-O}	Y. Nakanishi and Y. Ohyama,
		{\em Delta link homotopy for two component links, {II}},
		J. Knot Theory Ramifications, {\bf 11} (2002), 353--362.

\bibitem{NS-JKTR00} Y. Nakanishi and T. Shibuya,
		{\em Link homotopy and quasi self delta-equivalence for links},
		J. Knot Theory Ramifications, {\bf 9} (2000), 683--691.

\bibitem{NSY}	Y. Nakanishi, T. Shibuya and A. Yasuhara,
		{\it Self delta-equivalence of cobordant links},
		Proc. Amer. Math. Soc. {\bf 134} (2006), 2465--2472.

\bibitem{O}	Y. Ohyama, 
		{\em Vassiliev invariants and similarity of knots},
		Proc. Amer. Math. Soc. {\bf 123} (1995), 287--291.

\bibitem{R}	D. Rolfsen, \textit{Knots and links},
		Mathematics Lecture Series, No. 7. 
		Publish or Perish, Inc., Berkeley, Calif., 1976.

\bibitem{Shi}	T. Shibuya, {\it Self $\Delta$-equivalence of ribbon links},
		Osaka J. Math.  {\bf 33} (1996), 751--760.

%\bibitem{TS-KBJM01} T. Shibuya, {\em Self {$\sharp$}-equivalence of certain links},
%Kobe J. Math., {\bf 18} (2001), 61--67.

\bibitem{TS-KBJM07} T. Shibuya, {\em Weak self delta-equivalence for links},
		Kobe J. Math., {\bf 24} (2007), 41--52.

\bibitem{TS-MOIT07a} T. Shibuya,
		{\em Delta-cobordism and weak self delta-equivalence for links},
		Mem. Osaka Inst. Tech. Ser. A, {\bf 52} (2007), 5--9.

\bibitem{SY}	T. Shibuya and A. Yasuhara,
		{\it Boundary links are self delta-equivalent to trivial links},
		Math. Proc. Cambridge Philos. Soc. {\bf 143} (2007), 449--458.

\bibitem{S-Y}	T. Shibuya and A. Yasuhara,
		{\em Self $\Delta$-equivalence of links in solid tori in $S^3$},
		Kobe J. Math., {\bf 25} (2008), 59--64.

\bibitem{TY}	K. Taniyama and A. Yasuhara,
		Clasp-pass moves on knots, links and spatial graphs,
		Topology Appl. {\bf 122}(2002), 501--529.

\bibitem{Yasu}	A. Yasuhara, {\it Classification of string links up to 
		self delta-moves and concordance},
		Alg. Geom. Topol., {\bf 9} (2009) 265--275.

\bibitem{Yasu2} A. Yasuhara, {\it Self delta-equivalence for links whose
		Milnor's isotopy invariants vanish}, 
		Trans. Amer. Math. Soc., {\bf 361} (2009), 4721--4749.

\end{thebibliography}
\end{document}